\newtheorem{theorem}{Theorem}[section]
\newtheorem{lemma}[theorem]{Lemma}
\newtheorem{corollary}[theorem]{Corollary}
\newtheorem{definition}[theorem]{Definition}
\newenvironment{remark}{\noindent \textbf{Remark}.}{\hfill $\square$}
\newenvironment{proofof}[1]{\noindent \textit{Proof of #1.}}{\hfill $\square$}
\newcommand{\divg}{\mathop{\rm div}\nolimits}
\newcommand{\grad}{\mathop{\rm grad}\nolimits}
\newcommand{\dR}{\mathbb{R}}
\title[3D Lam\'{e} System]{Wiener Type Regularity of a Boundary Point for the 3D Lam\'{e} System}
\author{G. Luo \and V. G. Maz'ya}
\begin{document}
\maketitle


\section*{Abstract}
In this paper, we study the 3D Lam\'{e} system and establish its weighted positive definiteness for a certain range of elastic constants. By modifying the general theory developed in \cite{Maz'ya2002}, we then show, under the assumption
of weighted positive definiteness, that the divergence of the classical Wiener integral for a boundary point guarantees the continuity of solutions to the Lam\'{e} system at this point.

\section{The Main Results}
In our previous work \cite{Luo2007}, we studied weighted integral inequalities of the type
\begin{equation}
  \int_{\Omega} Lu \cdot \Psi u\,dx \geq 0
  \label{eqn_wpd}
\end{equation}
for general second order elliptic systems $L$ in $\dR^{n}\ (n \geq 3)$. For weights that are smooth and positive homogeneous of order $2-n$, we have shown that $L$ is positive definite in the sense of \eqref{eqn_wpd} only if the weight is the fundamental
matrix of $L$, possibly multiplied by a semi-positive definite constant matrix.

A question that arises naturally is under what conditions are elliptic systems indeed positive definite with such weights. Although it is difficult to answer this question in general, we study, as a special case, the 3D Lam\'{e} system
\begin{displaymath}
  L u = -\Delta u - \alpha \grad \divg u,\qquad u = (u_{1}, u_{2}, u_{3})^{T}
\end{displaymath}
in this paper, deriving sufficient conditions for its weighted positive definiteness and showing that some restrictions on the elastic constants are inevitable. By modifying the general theory developed in \cite{Maz'ya2002}, we then show that the
divergence of the classical Wiener integral for a boundary point guarantees the continuity of solutions to the Lam\'{e} system at this point, assuming the weighted positive definiteness.

We first recall the following definition.
\begin{definition}
Let $L$ be the 3D Lam\'{e} system
\begin{displaymath}
  L u = -\Delta u - \alpha \grad \divg u = -D_{kk} u_{i} - \alpha D_{ki} u_{k}\qquad (i = 1,2,3),
\end{displaymath}
where as usual repeated indices indicate summation. The system $L$ is said to be positive definite with weight $\Psi(x) = (\Psi_{ij}(x))_{i,j=1}^{3}$ if
\begin{equation}
  \int_{\dR^{3}} (Lu)^{T} \Psi u\,dx = -\int_{\dR^{3}} \bigl[ D_{kk} u_{i}(x) + \alpha D_{ki} u_{k}(x) \bigr] u_{j}(x) \Psi_{ij}(x)\,dx \geq 0
  \label{eqn_wpd1}
\end{equation}
for all real valued, smooth vector functions $u = (u_{i})_{i=1}^{3},\ u_{i} \in C_{0}^{\infty}(\dR^{3} \setminus \{0\})$. As usual, $D$ denotes the gradient $(D_{1},D_{2},D_{3})^{T}$ and $Du$ is the Jacobian matrix of $u$.
\end{definition}

\begin{remark}
The 3D Lam\'{e} system satisfies the strong elliptic condition if and only if $\alpha > -1$, and we will make this assumption throughout this paper.
\end{remark}

The fundamental matrix of the 3D Lam\'{e} system is given by $\Phi = (\Phi_{ij})_{i,j=1}^{3}$, where
\begin{align}
  \Phi_{ij} & = c_{\alpha} r^{-1} \Bigl( \delta_{ij} + \frac{\alpha}{\alpha+2} \omega_{i} \omega_{j} \Bigr)\qquad (i,j = 1,2,3), \label{eqn_Lame_fs} \\
  c_{\alpha} & = \frac{\alpha+2}{8\pi(\alpha+1)} > 0. \nonumber
\end{align}
As usual, $\delta_{ij}$ is the Kronecker delta, $r = |x|$ and $\omega_{i} = x_{i}/|x|$.

The first result we shall prove in this paper is the following
\begin{theorem}
The 3D Lam\'{e} system $L$ is positive definite with weight $\Phi$ when $\alpha_{-} < \alpha < \alpha_{+}$, where $\alpha_{-} \approx -0.194$ and $\alpha_{+} \approx 1.524$. It is not positive definite with weight $\Phi$ when $\alpha < \alpha_{-}^{(c)}
\approx -0.902$ or $\alpha > \alpha_{+}^{(c)} \approx 39.450$.
\label{thm_Lame_wpd}
\end{theorem}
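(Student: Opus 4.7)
My strategy is to reduce the positivity question to a family of explicit finite-dimensional spectral conditions by combining a Mellin substitution with a vector spherical harmonic decomposition, then to locate $\alpha_{\pm}$ and $\alpha_{\pm}^{(c)}$ as the thresholds where these polynomial conditions become sharp. The homogeneities $\Phi\sim r^{-1}$ and $L\sim r^{-2}$ make the bilinear form $\int_{\dR^{3}}(Lu)^{T}\Phi u\,dx$ invariant under every dilation $u(x)\mapsto u(\lambda x)$, which is the scale invariance that the Mellin transform diagonalizes.

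First, I would substitute $t=\log r$ and $u(r\omega)=w(t,\omega)$, so that $Lu=r^{-2}Mw$ for a matrix differential operator $M$ on $\dR\times S^{2}$ that is translation invariant in $t$, while $\Phi(x)=r^{-1}\Phi_{0}(\omega)$ with $\Phi_{0,ij}(\omega)=c_{\alpha}(\delta_{ij}+\frac{\alpha}{\alpha+2}\omega_{i}\omega_{j})$. Using $dx=r^{3}\,dt\,d\omega$, the form becomes
\begin{displaymath}
  \int_{\dR^{3}}(Lu)^{T}\Phi u\,dx=\int_{\dR}\!\!\int_{S^{2}}(Mw)^{T}\Phi_{0}(\omega)w\,d\omega\,dt,
\end{displaymath}
and Plancherel in $t$ reduces the question to the non-negativity, for every $\tau\in\dR$ and every three-vector-valued $v$ on $S^{2}$, of the Hermitian form
\begin{displaymath}
  Q_{\tau}(v):=\int_{S^{2}}(M(i\tau)v(\omega))^{*}\Phi_{0}(\omega)v(\omega)\,d\omega.
\end{displaymath}

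Second, I would decompose $v$ via the radial/tangential splitting $v=v_{r}\omega+v_{\mathrm{tan}}$ (with $\omega\cdot v_{\mathrm{tan}}=0$) and expand in vector spherical harmonics. The weight is diagonal in this splitting, since a direct calculation gives
\begin{displaymath}
  \Phi u=c_{\alpha}r^{-1}\Bigl[\tfrac{2(\alpha+1)}{\alpha+2}u_{r}\omega+u_{\mathrm{tan}}\Bigr],
\end{displaymath}
and because both $\Phi_{0}$ and $M$ commute with the $SO(3)$ action on $\dR^{3}$, the form $Q_{\tau}$ decouples into Hermitian blocks $Q_{\tau,k}$ of size at most three, indexed by the total angular momentum $k\geq 0$. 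The entries of each block are polynomials in $\tau$ and rational in $\alpha$, so positivity reduces to a finite family of explicit polynomial inequalities in $(\tau,\alpha)$ parametrized by $k$.

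Third, the interval $(\alpha_{-},\alpha_{+})$ is obtained by locating the values of $\alpha$ at which the minimum over $(k,\tau)$ of the smallest eigenvalue of $Q_{\tau,k}$ first touches zero; its asymmetry reflects that the extremal modes at the two boundaries differ. For the negative assertions, fix the critical pair $(k^{*},\tau^{*})$ and the unstable eigendirection $\varphi_{k^{*}}(\omega)$, then test with $w(t,\omega)=\chi(t)\cos(\tau^{*}t)\varphi_{k^{*}}(\omega)$ for a smooth compactly supported cutoff $\chi$; for $\alpha<\alpha_{-}^{(c)}$ or $\alpha>\alpha_{+}^{(c)}$ the Plancherel identity produces $\int(Lu)^{T}\Phi u\,dx<0$. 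The main obstacle is the explicit computation of $M(i\tau)$ and of the blocks $Q_{\tau,k}$: the $\grad\divg$ term couples the radial and tangential components through the angular divergence and the Laplace--Beltrami operator with $k$-dependent coefficients, and the bookkeeping must be carried out in a basis where $\Phi_{0}$ stays diagonal. The gap between $\alpha_{+}$ and $\alpha_{+}^{(c)}$ simply reflects that the sufficient condition derived this way need not be sharp at every mode simultaneously, and the numerical thresholds $\alpha_{-}\approx -0.194$, $\alpha_{+}\approx 1.524$, $\alpha_{-}^{(c)}\approx -0.902$, $\alpha_{+}^{(c)}\approx 39.450$ emerge as real roots of the discriminants of the block determinants at the extremal modes.
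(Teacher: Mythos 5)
Your Mellin-plus-spherical-harmonics plan is a genuinely different route from the paper's, and it is a reasonable idea for this scale-invariant form, but as written it is a sketch with two substantive gaps, and it would not actually reproduce the numbers in the statement.

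What the paper actually does: after integrating by parts once and exploiting that $\Phi$ is the fundamental matrix (so $-D_{kk}\Phi_{ij}-\alpha D_{ki}\Phi_{kj}=\delta_{ij}\delta(x)$), the form is rewritten as $\tfrac12|u(0)|^{2}+\mathscr B^{*}(u,u)$, and then $u$ is split only into its spherical mean $\bar u(r)$ and the zero-mean remainder $v$ (Lemmas \ref{lmm_Lame_wpd_1}--\ref{lmm_Lame_wpd_3}). All cross terms with $\bar u$ are shown to vanish, and $\mathscr B^{*}$ is bounded from below using Cauchy--Schwarz on $S^{2}$ and the spectral gap $\|v\|_{\omega}^{2}\le \tfrac{r^{2}}{2}\|Dv\|_{\omega}^{2}$, reducing positivity to the positive definiteness of a single constant matrix $B_{+}$ (for $\alpha\ge0$) or $B_{-}$ (for $\alpha<0$) in the three or two scalars $\|D_{r}\bar u\|_{\omega},\|Dv\|_{\omega},\|\divg v\|_{\omega}$. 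That is a deliberately crude reduction: it throws away almost all angular information, and $\alpha_{\pm}$ are the non-sharp roots of the resulting Sylvester conditions. The negative assertion is proved by an entirely separate local necessary condition (Proposition 3.11 of \cite{Luo2007}), namely that the matrix $A(\omega;\alpha)=(\sum_{i}A_{i1}^{\beta\gamma}\Phi_{i1}(\omega))_{\beta\gamma}$ be semi-positive definite for every $\omega\in S^{2}$; the numbers $\alpha_{\pm}^{(c)}$ are where the $2\times2$ leading minor of $A$ first changes sign at $\omega=(2^{-1/2},2^{-1/2},0)$. No Fourier-side test function is constructed.

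Two concrete problems with your plan. First, if your Mellin/vector-spherical-harmonic block decomposition were carried through, it would give a \emph{sharp} positivity criterion: the infimum over $(\tau,k)$ of the smallest eigenvalue of $Q_{\tau,k}$ vanishes exactly at the true thresholds. Those thresholds cannot coincide with the paper's $\alpha_{\pm}$, which are obtained from a lossy averaged estimate, and they also need not coincide with $\alpha_{\pm}^{(c)}$, which come from a necessary-but-possibly-not-sufficient symbol test. Your closing paragraph, which attributes the gap between $\alpha_{+}$ and $\alpha_{+}^{(c)}$ to your own method ``not being sharp at every mode simultaneously,'' contradicts the earlier claim that the decomposition is exact; and the assertion that $\alpha_{\pm}^{(c)}$ ``emerge as real roots of the discriminants of the block determinants'' is unsupported and not how those numbers arise. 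Second, and more basically, you never compute $M(i\tau)$, the block matrices $Q_{\tau,k}$, or verify positivity uniformly in $k$ (the $k\to\infty$ behavior must be controlled, and the $\grad\divg$ coupling between radial and tangential vector harmonics makes the blocks nontrivial). Until those are written down, this is a program rather than a proof. If you want to pursue it, you should expect to \emph{improve} the interval $(\alpha_{-},\alpha_{+})$ rather than reproduce it, and you will need the symbol criterion of \cite{Luo2007} (or an explicit degenerating test function) to address the negative statement.

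Also note a small structural point your plan misses: the statement is for $u_{i}\in C_{0}^{\infty}(\dR^{3}\setminus\{0\})$, but the paper's argument extends to $u_{i}\in C_{0}^{\infty}(\dR^{3})$ by producing the extra nonnegative term $\tfrac12|u(0)|^{2}$ from the fundamental-solution identity. In your $(t,\omega)$ picture, where compact support in $t$ hides the origin, you would need to recover this term (or at least acknowledge why it disappears) if you want the same quantitative lower bound \eqref{eqn_Lame_wpd_y} that drives the Wiener-type result in the next section.
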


The proof of this theorem is given in Section \ref{sec_Lame_wpd}.

Let $\Omega$ be an open set in $\dR^{3}$ and consider the Dirichlet problem
\begin{equation}
  Lu = f,\qquad f_{i} \in C_{0}^{\infty}(\Omega),\ u_{i} \in \mathring{H}^{1}(\Omega).
  \label{eqn_Dirichlet}
\end{equation}
As usual, $\mathring{H}^{1}(\Omega)$ is the completion of $C_{0}^{\infty}(\Omega)$ in the Sobolev norm:
\begin{displaymath}
  \|f\|_{H^{1}(\Omega)} = \Bigl[ \|f\|_{L^{2}(\Omega)}^{2} + \|Df\|_{L^{2}(\Omega)}^{2} \Bigr]^{1/2}.
\end{displaymath}

\begin{definition}
The point $P \in \partial \Omega$ is regular with respect to $L$ if, for any $f = (f_{i})_{i=1}^{3},\ f_{i} \in C_{0}^{\infty}(\Omega)$, the solution of \eqref{eqn_Dirichlet} satisfies
\begin{equation}
  \lim_{\Omega \ni x \to P} u_{i}(x) = 0\qquad (i=1,2,3).
  \label{eqn_regular}
\end{equation}
\end{definition}

\begin{definition}
The classical harmonic capacity of a compact set $K$ in $\dR^{3}$ is given by:
\begin{displaymath}
  \mathrm{cap}(K) = \inf \biggl\{ \int_{\dR^{3}} |Df(x)|^{2}\,dx: f \in \mathcal{A}(K) \biggr\},
\end{displaymath}
where
\begin{displaymath}
  \mathcal{A}(K) = \Bigl\{ f \in C_{0}^{\infty}(\dR^{3}): f = 1 \text{ in a neighborhood of $K$} \Bigr\}.
\end{displaymath}
Note that an equivalent definition of $\mathrm{cap}(K)$ can be obtained by replacing $\mathcal{A}(K)$ with $\mathcal{A}_{1}(K)$ where
\begin{displaymath}
  \mathcal{A}_{1}(K) = \Bigl\{ f \in C_{0}^{\infty}(\dR^{3}): f \geq 1 \text{ on $K$} \Bigr\}.
\end{displaymath}
(\cite{Maz'ya1985}, sec. 2.2.1).
\end{definition}

Using Theorem \ref{thm_Lame_wpd}, we will prove that the divergence of the classical Wiener integral for a boundary point $P$ guarantees its regularity with respect to the Lam\'{e} system. To simplify notations we assume, without loss of generality, that
$P = O$ is the origin of the space.

\begin{theorem}
Suppose the 3D Lam\'{e} system $L$ is positive definite with weight $\Phi$. Then $O \in \partial \Omega$ is regular with respect to $L$ if
\begin{equation}
  \int_{0}^{1} \mathrm{cap}(\bar{B}_{\rho} \setminus \Omega) \rho^{-2}\,d\rho = \infty.
  \label{eqn_Lame_wt}
\end{equation}
As usual, $B_{\rho}$ is the open ball centered at $O$ with radius $\rho$.
\label{thm_Lame_wt}
\end{theorem}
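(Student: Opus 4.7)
My plan is to adapt the framework of \cite{Maz'ya2002}, where weighted positive-definiteness is leveraged to derive a Wiener-type regularity criterion. Concretely, I would show that an appropriate scale-invariant mean of $u$ on successively smaller annuli decays as a function of the local capacity of $\bar{B}_{\rho}\setminus\Omega$, with the rate governed by the Wiener integral \eqref{eqn_Lame_wt}, and then bootstrap this decay to pointwise convergence via interior regularity for $Lu=0$.

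First, I localize: since $f_{i}\in C_{0}^{\infty}(\Omega)$, choose $R>0$ with $Lu=0$ in $\Omega\cap B_{2R}$, so the extension-by-zero of $u$ lies in $\mathring{H}^{1}(B_{2R}\setminus K_{\rho})$ for $K_{\rho}:=\bar{B}_{\rho}\setminus\Omega$ and each $\rho\leq R$. The core step is then to substitute a sequence of admissible test functions approximating $\eta_{\rho}u$ into \eqref{eqn_wpd1}, where $\eta_{\rho}$ is a smooth radial cutoff with $\eta_{\rho}\equiv 1$ on $B_{\rho}$ and $\supp\eta_{\rho}\subset B_{2\rho}$; a double regularization is required---near $O$ to avoid the singularity of $\Phi$, and near $\partial\Omega$ to lift $u$ from $\mathring{H}^{1}$ into $C_{0}^{\infty}(\dR^{3}\setminus\{0\})$. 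Because $Lu=0$ on $\supp\eta_{\rho}\cap\Omega$, the commutator expansion $L(\eta_{\rho}u)=[L,\eta_{\rho}]u$ concentrates the left-hand side of \eqref{eqn_wpd1} on the annulus $A_{\rho}:=B_{2\rho}\setminus B_{\rho}$. Combined with the pointwise lower bound $\Phi\geq c|x|^{-1}I$ (which holds for $\alpha>-1$ since the eigenvalues of $\delta_{ij}+\tfrac{\alpha}{\alpha+2}\omega_{i}\omega_{j}$ are $1,1,\tfrac{2(\alpha+1)}{\alpha+2}$) and the capacity--Poincar\'e inequality
\begin{equation*}
\int_{B_{\rho}}|v|^{2}\,dx \leq \frac{C\rho^{3}}{\mathrm{cap}(K_{\rho})}\int_{B_{\rho}}|Dv|^{2}\,dx,\qquad v\in\mathring{H}^{1}(B_{\rho}\setminus K_{\rho}),
\end{equation*}
(cf.\ \cite{Maz'ya1985}, Ch.~2), this should produce a recursive estimate of the form
\begin{equation*}
M(\rho)\leq \bigl(1-c\,\rho^{-1}\mathrm{cap}(K_{\rho})\bigr)M(2\rho),\qquad M(\rho):=\rho^{-3}\int_{A_{\rho}\cap\Omega}|u|^{2}\,dx,
\end{equation*}
with $c>0$ depending only on $\alpha$.

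Iterating along dyadic radii $\rho_{k}=2^{-k}R$ and using
\begin{equation*}
\sum_{k\geq k_{0}}\rho_{k}^{-1}\mathrm{cap}(K_{\rho_{k}}) \asymp \int_{0}^{\rho_{k_{0}}}\mathrm{cap}(\bar{B}_{\rho}\setminus\Omega)\rho^{-2}\,d\rho,
\end{equation*}
divergence of \eqref{eqn_Lame_wt} forces $M(\rho_{k})\to 0$. Interior $L^{\infty}$ estimates for the homogeneous Lam\'{e} system on half-annuli (together with the extension-by-zero of $u$ across $\partial\Omega\cap A_{\rho}$) then upgrade this mean-decay to the pointwise statement $|u(x)|\to 0$ as $\Omega\ni x\to O$, which is \eqref{eqn_regular}.

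The main obstacle is the test-function step. Unlike the scalar situation of \cite{Maz'ya2002}, where positivity of the Green's function makes each commutator contribution transparently signed, here the anisotropic piece $\tfrac{\alpha}{\alpha+2}\omega_{i}\omega_{j}$ of $\Phi$ couples the components of $u$ through both $\Delta$ and $\grad\divg$ in $[L,\eta_{\rho}]$, and preserving the correct sign in the recursive inequality requires the full strength of Theorem \ref{thm_Lame_wpd} together with careful pointwise bounds on $D\Phi$ in $A_{\rho}$. A subsidiary difficulty is justifying the double regularization near $O$ and near $\partial\Omega$ and showing that the limiting objects remain admissible in \eqref{eqn_wpd1}; this is where the hypothesis $u\in\mathring{H}^{1}(\Omega)$ and the capacitary vanishing on $K_{\rho}$ must be exploited simultaneously.
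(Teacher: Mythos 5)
Your outline shares the paper's localization, the capacity--Poincar\'e step, and the dyadic iteration, and the recursion
$M(\rho)\leq\bigl(1-c\,\rho^{-1}\mathrm{cap}(K_\rho)\bigr)M(2\rho)$ is morally the same as what the paper derives (the paper's version is $\varphi_r+\psi_r\leq\tfrac{c}{c+\gamma(r)}(\varphi_{2r}+\psi_{2r})$, which avoids the sign issue when $\gamma(r)$ is large). The eigenvalue computation for $\Phi$ and the dyadic-sum $\asymp$ Wiener-integral equivalence are also fine.

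The genuine gap is the final upgrade from mean decay to pointwise decay. You propose to obtain $|u(x)|\to 0$ from $M(\rho_k)\to 0$ via ``interior $L^{\infty}$ estimates for the homogeneous Lam\'{e} system on half-annuli, together with the extension-by-zero of $u$ across $\partial\Omega\cap A_\rho$.'' This step does not work: the extension-by-zero of a $\mathring{H}^{1}(\Omega)$ solution of $Lu=0$ on $\Omega\cap B_{2R}$ is not a solution (not even a sub- or supersolution in any useful sense) on the full annulus unless $\partial\Omega$ is regular, which is precisely what we cannot assume; and for elliptic systems there is no maximum principle or De~Giorgi--Nash--Moser truncation machinery to regain local boundedness up to an irregular boundary point. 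This is exactly the obstruction that the weighted-positivity hypothesis is there to overcome. You also apply the positivity inequality only with the weight $\Phi$ centered at the origin, which, since $\eta_\rho u$ vanishes near $O$, yields only the Dirichlet-energy term $c\int|D(\eta_\rho u)|^{2}|x|^{-1}\,dx$ and never a pointwise value of $u$.

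The missing idea is to exploit the translation covariance of the weighted positivity: apply \eqref{eqn_wpd1} with the \emph{translated} weight $\Phi_{y}(x)=\Phi(x-y)$ and test function $\eta_\rho u$, for an arbitrary interior point $y\in\Omega\cap B_\rho$. Since (after mollification) $\eta_\rho u\in C_{0}^{\infty}(\dR^{3}\setminus\{y\})$, the translated version of Lemma~\ref{lmm_Lame_wpd_2} produces the term $\tfrac{1}{2}|u(y)|^{2}$ on the left-hand side, together with $c\int|D(\eta_\rho u)|^{2}|x-y|^{-1}\,dx$. Combined with the commutator/annulus bound on $\int[L(\eta_\rho u)]^{T}\Phi_{y}(\eta_\rho u)\,dx$ (uniformly for $y\in B_\rho$, since $|\Phi_{y}|\leq c\rho^{-1}$ on the annulus), this yields the pointwise inequality
$|u(y)|^{2}+\int_{\Omega\cap B_\rho}|Du|^{2}|x-y|^{-1}\,dx\leq c\,m_\rho(u)$ for all $y\in\Omega\cap B_\rho$,
which then feeds into the dyadic iteration to give the sup-norm estimate directly; no separate boundary-regularity or $L^{\infty}$-bootstrapping step is needed or available. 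Until you insert this translated-weight step, the argument does not close.
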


The proof of this theorem is given in Section \ref{sec_Lame_wt_suf}.

\section{Proof of Theorem \ref{thm_Lame_wpd}\label{sec_Lame_wpd}}
We start the proof of Theorem \ref{thm_Lame_wpd} by rewriting the integral
\begin{displaymath}
  \int_{\dR^{3}} (L u)^{T} \Phi u\,dx = -\int_{\dR^{3}} \bigl( D_{kk} u_{i} + \alpha D_{ki} u_{k} \bigr) u_{j} \Phi_{ij}\,dx
\end{displaymath}
into a more revealing form. In the following, we shall write $\int f\,dx$ instead of $\int_{\dR^{3}} f\,dx$, and by $u_{ii}^{2}$ we always mean $\sum_{i=1}^{3} u_{ii}^{2}$; to express $(\sum_{i=1}^{3} u_{ii})^{2}$ we will write $u_{ii} u_{jj}$ instead.
Furthermore, we always assume $u_{i} \in C_{0}^{\infty}(\dR^{3})$ unless otherwise stated.

\begin{lemma}
\begin{equation}
  \int (L u)^{T} \Phi u\,dx = \frac{1}{2} |u(0)|^{2} + \mathscr{B}(u,u)
  \label{eqn_Lame_int_1}
\end{equation}
where
\begin{align*}
  \mathscr{B}(u,u) & = \frac{\alpha}{2} \int \bigl( u_{j} D_{k} u_{k} - u_{k} D_{k} u_{j} \bigr) D_{i} \Phi_{ij}\,dx \\
  &\quad{} + \int \bigl( D_{k} u_{i} D_{k} u_{j} + \alpha D_{k} u_{k} D_{i} u_{j} \bigr) \Phi_{ij}\,dx.
\end{align*}
\label{lmm_Lame_wpd_1}
\end{lemma}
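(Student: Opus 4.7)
My plan is to establish \eqref{eqn_Lame_int_1} by integration by parts on $\Omega_\epsilon := \dR^3 \setminus B_\epsilon$ and passing to the limit $\epsilon \to 0$. Three facts about $\Phi$ drive the argument: it is positive homogeneous of degree $-1$, so $|D^k\Phi| = O(r^{-1-k})$; it is symmetric, $\Phi_{ij}=\Phi_{ji}$; and as the fundamental matrix it satisfies $D_{kk}\Phi_{ab} + \alpha\, D_{ak}\Phi_{kb} = 0$ pointwise for $x \neq 0$.

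First I would integrate by parts once on $\Omega_\epsilon$, moving one derivative off $u$ in each of the two pieces of $-\int[D_{kk}u_i + \alpha D_{ki}u_k]u_j\Phi_{ij}\,dx$. Since $|\Phi_{ij}|=O(\epsilon^{-1})$ is dominated by $|dS|=O(\epsilon^2)$ on $\partial B_\epsilon$, the boundary terms from this first pass vanish as $\epsilon\to 0$. What emerges is the ``$Du\otimes Du$'' block $\int(D_k u_i D_k u_j + \alpha D_k u_k D_i u_j)\Phi_{ij}\,dx$ --- the second line of $\mathscr{B}(u,u)$ --- plus the remainder $\int u_j D_k u_i D_k\Phi_{ij}\,dx + \alpha \int u_j D_k u_k D_i\Phi_{ij}\,dx$. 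A Leibniz splitting $\alpha u_j D_k u_k = \tfrac{\alpha}{2}(u_j D_k u_k - u_k D_k u_j) + \tfrac{\alpha}{2}D_k(u_j u_k)$ carves off the antisymmetric combination matching the first line of $\mathscr{B}$, reducing the claim to
\[
R_\epsilon := \int u_j D_k u_i\, D_k \Phi_{ij}\,dx + \tfrac{\alpha}{2}\int D_k(u_j u_k)\, D_i\Phi_{ij}\,dx \longrightarrow \tfrac12\, |u(0)|^2.
\]

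To treat $R_\epsilon$ I would integrate by parts once more on $\Omega_\epsilon$, this time retaining boundary contributions because $|D\Phi| = O(\epsilon^{-2})$ is now balanced by $|dS| = O(\epsilon^2)$. Using the symmetry $\Phi_{ij} = \Phi_{ji}$, a single IBP on the first integral of $R_\epsilon$ gives twice it equal to a boundary term minus $\int u_i u_j D_{kk}\Phi_{ij}\,dx$. Another IBP on the second integral yields a further boundary term minus $\tfrac{\alpha}{2}\int u_j u_k D_{ki}\Phi_{ij}\,dx$. After relabeling dummy indices (again using $\Phi_{ij}=\Phi_{ji}$), the two volume pieces combine into $\tfrac12\int_{\Omega_\epsilon} u_a u_b[D_{kk}\Phi_{ab} + \alpha D_{ak}\Phi_{kb}]\,dx$, which vanishes identically by the third fact above.

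Only the $\partial B_\epsilon$ contributions then survive. Using Euler's identity $x_k D_k\Phi_{ij} = -\Phi_{ij}$ and the explicit divergence $D_i\Phi_{ij} = -\omega_j/[4\pi(\alpha+1)r^2]$ derived from \eqref{eqn_Lame_fs}, each boundary integral reduces in the limit to a spherical average of $u_i(0)u_j(0)$ against either $c_\alpha(\delta_{ij}+\tfrac{\alpha}{\alpha+2}\omega_i\omega_j)$ or $\omega_j\omega_k$, both computable via $\int_{S^2}d\omega=4\pi$ and $\int_{S^2}\omega_i\omega_j\,d\omega=\tfrac{4\pi}{3}\delta_{ij}$. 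The main --- and essentially the only --- obstacle is the bookkeeping: one must verify that the two collected coefficients, $\tfrac{2\alpha+3}{6(\alpha+1)}$ from the Laplacian boundary piece and $\tfrac{\alpha}{6(\alpha+1)}$ from the divergence boundary piece, add to exactly $\tfrac12$. That they do is nothing other than the integrated form of $L\Phi = \delta I$.
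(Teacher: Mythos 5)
Your proposal is correct, and the main ingredients match the paper's own proof: both hinge on the symmetry $\Phi_{ij}=\Phi_{ji}$, an integration by parts that converts the second-order terms into the $(Du,Du)$ block plus a remainder, and the fundamental-solution property of $\Phi$. The essential difference is in how the $\tfrac12|u(0)|^2$ term is extracted. The paper works directly with the distributional identity $-D_{kk}\Phi_{ij}-\alpha D_{ki}\Phi_{kj}=\delta_{ij}\delta(x)$, so the delta function delivers $\tfrac12|u(0)|^2$ in a single step without ever touching a boundary integral. You instead regularize on $\dR^3\setminus B_\epsilon$, use the pointwise (homogeneous) version of that PDE so that the two second-derivative volume pieces cancel exactly, and then recover $\tfrac12|u(0)|^2$ from the $\partial B_\epsilon$ boundary terms, using Euler's identity $x_kD_k\Phi_{ij}=-\Phi_{ij}$, the explicit formula $D_i\Phi_{ij}=-\omega_j/[4\pi(\alpha+1)r^2]$, and the spherical averages $\int_{S^2}d\sigma=4\pi$, $\int_{S^2}\omega_i\omega_j\,d\sigma=\tfrac{4\pi}{3}\delta_{ij}$. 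Your two boundary coefficients $\tfrac{2\alpha+3}{6(\alpha+1)}$ and $\tfrac{\alpha}{6(\alpha+1)}$ are correct and sum to $\tfrac12$, so the argument closes. The trade-off is that your route requires a bit more bookkeeping (and the explicit form of $\Phi$ at this stage, which the paper defers to later lemmas), but in exchange it makes the normalization of the fundamental matrix manifest rather than invoking distribution theory; either version is sound.
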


\begin{proof}
By definition,
\begin{displaymath}
  \int (L u)^{T} \Phi u\,dx = -\int D_{kk} u_{i} \cdot u_{j} \Phi_{ij}\,dx - \alpha \int D_{ki} u_{k} \cdot u_{j} \Phi_{ij}\,dx =: I_{1} + I_{2}.
\end{displaymath}
Since $\Phi$ is symmetric, we have $\Phi_{ij} = \Phi_{ji}$ and
\begin{align*}
  I_{1} & = -\int D_{kk} u_{i} \cdot u_{j} \Phi_{ij}\,dx \\
  & = -\frac{1}{2} \int \Bigl[ D_{kk} (u_{i} u_{j}) - 2 D_{k} u_{i} D_{k} u_{j} \Bigr] \Phi_{ij}\,dx \\
  & = -\frac{1}{2} \int u_{i} u_{j} D_{kk} \Phi_{ij}\,dx + \int D_{k} u_{i} D_{k} u_{j} \cdot \Phi_{ij}\,dx.
\end{align*}
On the other hand, $\Phi$ is the fundamental matrix of $L$, so we have
\begin{displaymath}
  -D_{kk} \Phi_{ij} - \alpha D_{ki} \Phi_{kj} = \delta_{ij} \delta(x),
\end{displaymath}
and
\begin{align*}
  \lefteqn{-\frac{1}{2} \int u_{i} u_{j} D_{kk} \Phi_{ij}\,dx = \frac{1}{2} \int u_{i} u_{j} \Bigl[ \delta_{ij} \delta(x) + \alpha D_{ki} \Phi_{kj} \Bigr]\,dx} \\
  &\qquad\qquad = \frac{1}{2} |u(0)|^{2} - \frac{\alpha}{2} \int \bigl( D_{i} u_{i} \cdot u_{j} + u_{i} D_{i} u_{j} \bigr) D_{k} \Phi_{kj}\,dx \\
  &\qquad\qquad = \frac{1}{2} |u(0)|^{2} - \frac{\alpha}{2} \int \bigl( D_{k} u_{k} \cdot u_{j} + u_{k} D_{k} u_{j} \bigr) D_{i} \Phi_{ij}\,dx.
\end{align*}
Now $I_{2}$ can be written as
\begin{displaymath}
  I_{2} = \alpha \int D_{k} u_{k} \bigl( D_{i} u_{j} \cdot \Phi_{ij} + u_{j} D_{i} \Phi_{ij} \bigr)\,dx,
\end{displaymath}
and the lemma follows by adding up the results.
\end{proof}

\begin{remark}
With $\Phi(x)$ replaced by $\Phi_{y}(x) := \Phi(x-y)$, we have
\begin{align*}
  \int (L u)^{T} \Phi_{y} u\,dx & = \int (L u_{y})^{T} \Phi u_{y}\,dx\qquad\qquad (u_{y}(x) = u(x+y)) \\
  & = \frac{1}{2} |u_{y}(0)|^{2} + \mathscr{B}(u_{y},u_{y}) =: \frac{1}{2} |u(y)|^{2} + \mathscr{B}_{y}(u,u),
\end{align*}
where
\begin{align*}
  \mathscr{B}_{y}(u,u) & = \frac{\alpha}{2} \int \bigl( u_{j} D_{k} u_{k} - u_{k} D_{k} u_{j} \bigr) D_{i} \Phi_{y,ij}\,dx \\
  &\quad{} + \int \bigl( D_{k} u_{i} D_{k} u_{j} + \alpha D_{k} u_{k} D_{i} u_{j} \bigr) \Phi_{y,ij}\,dx.
\end{align*}
\end{remark}

To proceed, we introduce the following decomposition for $C_{0}^{\infty}(\dR^{3})$ functions:
\begin{displaymath}
  f(x) = \bar{f}(r) + g(x),\qquad \bar{f} \in C_{0}^{\infty}[0,\infty),\ g \in C_{0}^{\infty}(\dR^{3}),
\end{displaymath}
where
\begin{displaymath}
  \bar{f}(r) = \frac{1}{4\pi} \int_{S^{2}} f(r\omega)\,d\sigma.
\end{displaymath}
Note that
\begin{displaymath}
  \int_{S^{2}} g(r\omega)\,d\sigma = 0,\qquad \forall r \geq 0,
\end{displaymath}
so we may think of $\bar{f}$ as the ``0-th order harmonics'' of the function $f$. We shall show below in Lemma \ref{lmm_Lame_wpd_2} that all 0-th order harmonics in \eqref{eqn_Lame_int_1} are canceled out, so it is possible to control $u$ by $Du$.

\begin{lemma}
With the decomposition
\begin{equation}
  u_{i}(x) = \bar{u}_{i}(r) + v_{i}(x)\qquad (i = 1,2,3)
  \label{eqn_decomp}
\end{equation}
where
\begin{displaymath}
  \begin{cases}
    \displaystyle \bar{u}_{i}(r) = \frac{1}{4\pi} \int_{S^{2}} u_{i}(r\omega)\,d\sigma \\
    \displaystyle \int_{S^{2}} v_{i}(r\omega)\,d\sigma = 0
  \end{cases}
  \qquad \forall r \geq 0\qquad (i = 1,2,3),
\end{displaymath}
we have
\begin{equation}
  \int (L u)^{T} \Phi u\,dx = \frac{1}{2} |u(0)|^{2} + \mathscr{B}^{*}(u,u)
  \label{eqn_Lame_int_2}
\end{equation}
where
\begin{align}
  \mathscr{B}^{*}(u,u) & = \frac{\alpha}{2} \int \bigl( v_{j} D_{k} v_{k} - v_{k} D_{k} v_{j} \bigr) D_{i} \Phi_{ij}\,dx \label{eqn_B_star} \\
  &\quad{} + \int \bigl( D_{k} u_{i} D_{k} u_{j} + \alpha D_{k} u_{k} D_{i} u_{j} \bigr) \Phi_{ij}\,dx. \nonumber
\end{align}
\label{lmm_Lame_wpd_2}
\end{lemma}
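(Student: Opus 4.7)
The plan is to expand $u_i = \bar{u}_i(r) + v_i(x)$ directly in the factor $u_j D_k u_k - u_k D_k u_j$ appearing in the first integral of $\mathscr{B}(u,u)$; the second integral is identical in $\mathscr{B}^*(u,u)$ by inspection. A direct computation from \eqref{eqn_Lame_fs} yields
\[
  D_i \Phi_{ij} = -\frac{1}{4\pi(\alpha+1)}\cdot\frac{\omega_j}{r^2},
\]
so that $D_i\Phi_{i\cdot}$ is a constant multiple of the radial vector $\omega/r^2$; this symmetry is what will drive every cancellation. Expanding $u = \bar{u} + v$ in the factor above produces four natural groups of terms: (A) both entries are $\bar{u}$; (B) the values are $\bar{u}$ and the derivatives hit $v$; (C) the values are $v$ and the derivatives hit $\bar{u}$; (D) both entries are $v$. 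Group (D) is precisely the integrand retained in $\mathscr{B}^*$, so the lemma reduces to showing that the integrals of (A), (B), and (C) against $D_i\Phi_{ij}$ all vanish.

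Since $\bar{u}_j$ depends only on $r$, I have $D_k\bar{u}_j = \bar{u}_j'(r)\omega_k$. Group (A) then contributes an integrand proportional to $(\bar{u}_j\bar{u}_k' - \bar{u}_k\bar{u}_j')\omega_j\omega_k$, which is antisymmetric in $j,k$ against the symmetric tensor $\omega_j\omega_k$ and hence vanishes pointwise. Exactly the same mechanism dispatches group (C): its integrand reduces to $(v_j\bar{u}_k' - v_k\bar{u}_j')\omega_j\omega_k = 0$.

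The only nontrivial piece is group (B), whose integrand is proportional to $\bar{u}_j\bigl[\omega_j D_k v_k - \omega_k D_j v_k\bigr]/r^2$. Here I would use the pointwise symmetry $D_j\omega_k = D_k\omega_j = (\delta_{jk}-\omega_j\omega_k)/r$ to rewrite
\[
  \omega_j D_k v_k - \omega_k D_j v_k = D_k(\omega_j v_k) - D_j(v\cdot\omega),
\]
and then integrate by parts in each of the two resulting pieces (on $D_k$ in the first, on $D_j$ in the second). A short computation of $D_k(\bar{u}_j/r^2)$ and $D_j(\bar{u}_j/r^2)$ shows that both pieces produce the same combination of integrals of $(\bar{u}'\cdot\omega)(v\cdot\omega)/r^2$ and $(\bar{u}\cdot\omega)(v\cdot\omega)/r^3$, and so they cancel. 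The main obstacle is justifying that the boundary terms in these integrations by parts vanish: the compact support of $u$ handles infinity, while $v(0) = 0$ (a consequence of $\bar{u}(0) = u(0)$) is what tames the $1/r^2$ singularity of $D_i\Phi_{ij}$ at the origin.
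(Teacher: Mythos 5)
Your proposal is correct and follows essentially the same route as the paper: expand $u=\bar{u}+v$ in the antisymmetric first integral, use $D_i\Phi_{ij}=d_\alpha r^{-2}\omega_j$ and the pointwise antisymmetry in $(j,k)$ to dispatch the $\bar{u}\bar{u}$ and $v\bar{u}$ groups, and integrate by parts to kill the mixed $\bar{u}v$ group. The only organizational difference is that you first apply the product-rule identity $\omega_j D_k v_k-\omega_k D_j v_k=D_k(\omega_j v_k)-D_j(v\cdot\omega)$ and then integrate by parts against the scalar $\bar{u}_j r^{-2}$, whereas the paper integrates by parts directly against $\bar{u}_j r^{-2}\omega_j$ and $\bar{u}_j r^{-2}\omega_k$ and observes that the resulting surface and volume integrands are literally identical in the two terms. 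One small remark: the invocation of $v(0)=0$ to control the inner boundary terms, while true, is not actually needed---after your rewriting, the two surface integrals on $S_\epsilon$ are exact negatives of each other (both reduce to $\pm\int_{S^2}\bar{u}_j(\epsilon)\omega_j\,(v\cdot\omega)(\epsilon\omega)\,d\sigma$ once the $\epsilon^{-2}$ from $D_i\Phi_{ij}$ cancels the $\epsilon^{2}$ from the surface measure), so they cancel identically for every $\epsilon>0$.
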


\begin{proof}
By Lemma \ref{lmm_Lame_wpd_1}, it is enough to show
\begin{displaymath}
  \int \bigl( u_{j} D_{k} u_{k} - u_{k} D_{k} u_{j} \bigr) D_{i} \Phi_{ij}\,dx = \int \bigl( v_{j} D_{k} v_{k} - v_{k} D_{k} v_{j} \bigr) D_{i} \Phi_{ij}\,dx.
\end{displaymath}
Since
\begin{align*}
  \lefteqn{\int \bigl( u_{j} D_{k} u_{k} - u_{k} D_{k} u_{j} \bigr) D_{i} \Phi_{ij}\,dx} \\
  &\quad = \int \bigl( \bar{u}_{j} D_{k} \bar{u}_{k} - \bar{u}_{k} D_{k} \bar{u}_{j} \bigr) D_{i} \Phi_{ij}\,dx + \int \bigl( \bar{u}_{j} D_{k} v_{k} - \bar{u}_{k} D_{k} v_{j} \bigr) D_{i} \Phi_{ij}\,dx \\
  &\quad\quad{} + \int \bigl( v_{j} D_{k} \bar{u}_{k} - v_{k} D_{k} \bar{u}_{j} \bigr) D_{i} \Phi_{ij}\,dx + \int \bigl( v_{j} D_{k} v_{k} - v_{k} D_{k} v_{j} \bigr) D_{i} \Phi_{ij}\,dx \\
  &\quad =: I_{1} + I_{2} + I_{3} + I_{4},
\end{align*}
it suffices to show $I_{1} = I_{2} = I_{3} = 0$. Now
\begin{align}
  D_{i} \Phi_{ij} & = D_{i} \biggl[ c_{\alpha} r^{-1} \Bigl( \delta_{ij} + \frac{\alpha}{\alpha+2} \omega_{i} \omega_{j} \Bigr) \biggr] \nonumber \\
  & = -c_{\alpha} r^{-2} \omega_{i} \delta_{ij} + \frac{c_{\alpha} \alpha}{\alpha+2} r^{-2} \Bigl[ -\omega_{i}^{2} \omega_{j} + (\delta_{ii} - \omega_{i}^{2}) \omega_{j} + (\delta_{ji} - \omega_{j} \omega_{i}) \omega_{i} \Bigr] \nonumber \\
  & = -c_{\alpha} r^{-2} \omega_{j} + \frac{c_{\alpha} \alpha}{\alpha+2} r^{-2} \omega_{j} =: d_{\alpha} r^{-2} \omega_{j},
  \label{eqn_D_Phi}
\end{align}
where
\begin{displaymath}
  d_{\alpha} = \frac{-2c_{\alpha}}{\alpha+2} = \frac{-1}{4\pi(\alpha+1)}.
\end{displaymath}
We have
\begin{align*}
  I_{1} & = d_{\alpha} \int r^{-2} \omega_{j} \bigl( \bar{u}_{j} D_{r} \bar{u}_{k} \cdot \omega_{k} - \bar{u}_{k} D_{r} \bar{u}_{j} \cdot \omega_{k} \bigr)\,dx & & (D_r = \partial/\partial r) \\
  & = d_{\alpha} \int r^{-2} \bigl( \bar{u}_{j} D_{r} \bar{u}_{k} \cdot \omega_{j} \omega_{k} - \bar{u}_{k} D_{r} \bar{u}_{j} \cdot \omega_{k} \omega_{j} \bigr)\,dx = 0, \\
  I_{3} & = d_{\alpha} \int r^{-2} \bigl( v_{j} D_{r} \bar{u}_{k} \cdot \omega_{j} \omega_{k} - v_{k} D_{r} \bar{u}_{j} \cdot \omega_{k} \omega_{j} \bigr)\,dx = 0.
\end{align*}
As for $I_{2}$, we obtain
\begin{align*}
  I_{2} & = d_{\alpha} \int r^{-2} \bigl( \bar{u}_{j} D_{k} v_{k} \cdot \omega_{j} - \bar{u}_{k} D_{k} v_{j} \cdot \omega_{j} \bigr)\,dx \\
  & = d_{\alpha} \int r^{-2} \bigl( \bar{u}_{j} D_{k} v_{k} \cdot \omega_{j} - \bar{u}_{j} D_{j} v_{k} \cdot \omega_{k} \bigr)\,dx \\
  & = -\lim_{\epsilon \to 0^{+}} d_{\alpha} \int_{S^{2}} \Bigl[ \bar{u}_{j}(\epsilon) v_{k}(\epsilon \omega) \omega_{j} \omega_{k} - \bar{u}_{j}(\epsilon) v_{k}(\epsilon \omega) \omega_{j} \omega_{k} \Bigr]\,d\sigma \\
  &\quad{} - \lim_{\epsilon \to 0^{+}} d_{\alpha} \int_{\dR^{3} \setminus B_{\epsilon}} \biggl\{ v_{k} r^{-3} \Bigl[ -2 \bar{u}_{j} \omega_{j} \omega_{k} + r D_{r} \bar{u}_{j} \cdot \omega_{j} \omega_{k} + \bar{u}_{j} (\delta_{jk} - \omega_{j} \omega_{k}) \Bigr] \\
  &\qquad\qquad{} - v_{k} r^{-3} \Bigl[ -2 \bar{u}_{j} \omega_{j} \omega_{k} + r D_{r} \bar{u}_{j} \cdot \omega_{j} \omega_{k} + \bar{u}_{j} (\delta_{kj} - \omega_{k} \omega_{j}) \Bigr] \biggr\}\,dx = 0.
\end{align*}
The result follows.
\end{proof}

\begin{remark}
With $\Phi(x)$ replaced by $\Phi_{y}(x) := \Phi(x-y)$ and \eqref{eqn_decomp} replaced by
\begin{displaymath}
  u_{i}(x) = \bar{u}_{i}(r_{y}) + v_{i}(x)\qquad (i = 1,2,3),
\end{displaymath}
where $r_{y} = |x-y|$ and
\begin{displaymath}
  \begin{cases}
    \displaystyle \bar{u}_{i}(r_{y}) = \frac{1}{4\pi} \int_{S^{2}} u_{i}(y+r_{y}\omega)\,d\sigma \\
    \displaystyle \int_{S^{2}} v_{i}(y+r_{y}\omega)\,d\sigma = 0
  \end{cases}
  \qquad \forall r_{y} \geq 0\qquad (i = 1,2,3),
\end{displaymath}
we have
\begin{displaymath}
  \int (L u)^{T} \Phi_{y} u\,dx = \frac{1}{2} |u(y)|^{2} + \mathscr{B}^{*}_{y}(u,u)
\end{displaymath}
where
\begin{align*}
  \mathscr{B}^{*}_{y}(u,u) & = \frac{\alpha}{2} \int \bigl( v_{j} D_{k} v_{k} - v_{k} D_{k} v_{j} \bigr) D_{i} \Phi_{y,ij}\,dx \\
  &\quad{} + \int \bigl( D_{k} u_{i} D_{k} u_{j} + \alpha D_{k} u_{k} D_{i} u_{j} \bigr) \Phi_{y,ij}\,dx.
\end{align*}
\end{remark}

In the next Lemma, we use the definition of $\Phi$ and derive an explicit expression for the bilinear form $\mathscr{B}^{*}(u,u)$ defined in \eqref{eqn_B_star}.

\begin{lemma}
\begin{align}
  \lefteqn{\mathscr{B}^{*}(u,u) = c_{\alpha} \int \bigg\{ \frac{\alpha}{\alpha+2} r^{-2} \Bigl[ v_{k} (D_{k} v) \cdot \omega - (\divg v) (v \cdot \omega) \Bigr]} \label{eqn_Lame_int_3} \\
  &\hspace{0.4in}{} + r^{-1} \Bigl[ |D_{r} \bar{u}|^{2} + \alpha \frac{2\alpha+3}{\alpha+2} (D_{r} \bar{u}_{i})^{2} \omega_{i}^{2} + |D v|^{2} + \alpha (\divg v)^{2} \nonumber \\
  &\hspace{0.85in}{} + \frac{\alpha}{\alpha+2} |(D_{k} v) \cdot \omega|^{2} + \frac{\alpha^{2}}{\alpha+2} (\divg v) [\omega_{i} (D_{i} v) \cdot \omega] \nonumber \\
  &\hspace{0.85in}{} + \alpha \frac{3\alpha+4}{\alpha+2} (D_{r} \bar{u} \cdot \omega) (\divg v) + \alpha (D_{r} \bar{u} \cdot \omega) [\omega_{i} (D_{i} v) \cdot \omega] \Bigr] \biggr\}\,dx. \nonumber
\end{align}
\label{lmm_Lame_wpd_3}
\end{lemma}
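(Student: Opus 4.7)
The plan is to expand everything explicitly using the definition of $\Phi$ and the decomposition $u_i = \bar{u}_i + v_i$. Since $\bar{u}_i$ depends only on $r$, I would exploit $D_k\bar{u}_i = \omega_k D_r\bar{u}_i$, so that
\begin{displaymath}
  D_k u_i = \omega_k D_r\bar{u}_i + D_k v_i, \qquad D_k u_k = D_r\bar{u}\cdot\omega + \divg v.
\end{displaymath}
The first integral in \eqref{eqn_B_star} is dispatched immediately using the identity $D_i\Phi_{ij} = d_\alpha r^{-2}\omega_j$ with $d_\alpha = -2c_\alpha/(\alpha+2)$ already computed in the proof of Lemma~\ref{lmm_Lame_wpd_2}; this yields the first line of \eqref{eqn_Lame_int_3} with coefficient $\frac{\alpha c_\alpha}{\alpha+2}$ on the spot. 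For the second integral, I would substitute $\Phi_{ij} = c_\alpha r^{-1}(\delta_{ij} + \frac{\alpha}{\alpha+2}\omega_i\omega_j)$, expand $D_k u_i D_k u_j$ and $D_k u_k D_i u_j$ using the decomposition above, and bin the resulting terms by how many factors come from the radial part: (radial, radial), radial--$v$ cross, and ($v$, $v$).

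The one genuine cancellation required comes from the ``flat'' cross term $\int r^{-1} D_r\bar{u}_i\,\omega_k D_k v_i\,dx$ arising from the $\delta_{ij}$-part of $D_k u_i D_k u_j\Phi_{ij}$; in spherical coordinates it equals
\begin{displaymath}
  \int_0^\infty r\,D_r\bar{u}_i(r) \int_{S^2} \partial_r v_i(r\omega)\,d\sigma\,dr,
\end{displaymath}
which vanishes because $\int_{S^2} v_i(r\omega)\,d\sigma \equiv 0$ forces the inner integral to vanish for every $r$. Every other cross term carries an extra factor $\omega_i\omega_j$ and survives, producing precisely the two mixed contributions $(D_r\bar{u}\cdot\omega)(\divg v)$ and $(D_r\bar{u}\cdot\omega)[\omega_i(D_i v)\cdot\omega]$ in the statement. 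A second easy fact I would use is the angular orthogonality $\int r^{-1} g(r)\omega_i\omega_j\,dx = 0$ for $i\neq j$ whenever $g$ is radial, which allows replacing $(D_r\bar{u}\cdot\omega)^2$ by $(D_r\bar{u}_i)^2\omega_i^2$ inside the integral to match the form displayed in \eqref{eqn_Lame_int_3}.

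After these two observations, everything reduces to bookkeeping. The two $(D_r\bar{u}\cdot\omega)^2$ contributions, with weights $\frac{\alpha}{\alpha+2}$ from $D_k u_i D_k u_j\Phi_{ij}$ and $\alpha\cdot\frac{2(\alpha+1)}{\alpha+2}$ from $\alpha D_k u_k D_i u_j\Phi_{ij}$, combine into $\frac{\alpha(2\alpha+3)}{\alpha+2}$; the two $(D_r\bar{u}\cdot\omega)(\divg v)$ contributions sum to $\alpha\cdot\frac{3\alpha+4}{\alpha+2}$; the $(D_r\bar{u}\cdot\omega)[\omega_i(D_i v)\cdot\omega]$ contributions, with coefficients $\frac{2\alpha}{\alpha+2}$ and $\frac{\alpha^2}{\alpha+2}$, add to exactly $\alpha$; and the remaining pure-$v$ terms are read off directly, yielding $|Dv|^2 + \frac{\alpha}{\alpha+2}|(D_k v)\cdot\omega|^2$ from the first product and $\alpha(\divg v)^2 + \frac{\alpha^2}{\alpha+2}(\divg v)[\omega_i(D_i v)\cdot\omega]$ from the second. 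The main obstacle is thus not conceptual but clerical: keeping track of roughly a dozen terms, each carrying its own rational function of $\alpha$, and verifying that every cancellation and every sum lands on the coefficient displayed in \eqref{eqn_Lame_int_3}.
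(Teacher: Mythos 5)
Your proposal is correct and follows essentially the same route as the paper's proof: the identity $D_i\Phi_{ij}=d_\alpha r^{-2}\omega_j$ from the proof of Lemma~\ref{lmm_Lame_wpd_2} handles the first integral, the second is expanded by splitting $\Phi_{ij}$ into its $\delta_{ij}$ and $\omega_i\omega_j$ parts and substituting $u_i=\bar u_i+v_i$, and the one vanishing cross term is the same one the paper kills via Lemma~\ref{lmm_ortho} (your direct spherical computation $\int_{S^2}\partial_r v_i\,d\sigma=0$ is an equivalent way to establish that orthogonality). The coefficient sums you list match the paper's $I_3,\dots,I_6$ bookkeeping exactly.
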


Before proving this lemma, we need a simple yet important observation that will be useful in the following computation.

\begin{lemma}
Let $g \in C_{0}^{\infty}(\dR^{3})$ be such that
\begin{displaymath}
  \int_{S^{2}} g(r\omega)\,d\sigma = 0,\qquad \forall r \geq 0.
\end{displaymath}
Then
\begin{displaymath}
  \begin{cases}
    \displaystyle \int f(r) g(x)\,dx = 0 \\
    \displaystyle \int r^{-1} D f(r) \cdot D g(x)\,dx = 0
  \end{cases}
  \qquad \forall f \in C_{0}^{\infty}[0,\infty).
\end{displaymath}
\label{lmm_ortho}
\end{lemma}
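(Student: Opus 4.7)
The plan is to rewrite both integrals in spherical coordinates $(r,\omega) \in [0,\infty) \times S^{2}$ and exhibit the vanishing angular mean of $g$ (respectively, of its radial derivative) as the reason each integral is zero. Both claims reduce to elementary manipulations once we exploit the fact that $f$ and $Df$ are radial.

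For the first identity, I would simply write
\begin{displaymath}
  \int_{\dR^{3}} f(r) g(x)\,dx = \int_{0}^{\infty} f(r) r^{2} \Bigl( \int_{S^{2}} g(r\omega)\,d\sigma \Bigr)\,dr = 0,
\end{displaymath}
since the inner integral vanishes identically in $r$ by hypothesis. For the second identity, the key observation is that $Df(r) = f'(r) \omega$, so that $D f(r) \cdot D g(x) = f'(r)\, \omega_{i} D_{i} g(x) = f'(r)\, D_{r} g(x)$, where $D_{r} = \omega_{i} D_{i}$ is the radial derivative. Passing to spherical coordinates and moving $D_{r}$ outside the angular integral (justified by smoothness and compact support of $g$) gives
\begin{displaymath}
  \int_{\dR^{3}} r^{-1} Df(r) \cdot Dg(x)\,dx = \int_{0}^{\infty} r f'(r) \frac{d}{dr} \Bigl( \int_{S^{2}} g(r\omega)\,d\sigma \Bigr)\,dr = 0,
\end{displaymath}
again by the hypothesis on $g$. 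Neither step involves any essential obstacle; the only point that deserves care is the interchange of differentiation and integration on $S^{2}$, which is standard given that $g \in C_{0}^{\infty}(\dR^{3})$.
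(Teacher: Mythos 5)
Your proof of the first identity is the same as the paper's: pass to polar coordinates and use the vanishing angular mean of $g$. For the second identity, however, you take a genuinely different (and somewhat more direct) route. The paper integrates by parts, moving the derivative $D_i$ off of $g$ and onto $r^{-1}(D_r f)\omega_i$, which produces the radial function $r^{-2}D_r f + r^{-1}D_{rr}f$ multiplying $g$; it then reduces the second identity to the first. You instead observe that $Df(r)\cdot Dg(x) = f'(r)\,D_r g(x)$ with $D_r = \omega_i D_i$, switch to polar coordinates, and interchange $D_r$ with the angular integral to conclude directly that the integrand is $r f'(r)\,\frac{d}{dr}\bigl(\int_{S^2} g\,d\sigma\bigr) = 0$. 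Both arguments are correct; yours avoids the integration by parts and does not need to invoke the first identity, while the paper's avoids having to justify differentiating under the integral sign (trivial here, but still a step you correctly flag). Your version is arguably the cleaner of the two for this particular lemma.
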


\begin{proof}
By switching to polar coordinates, we easily see that
\begin{displaymath}
  \int f(r) g(x)\,dx = \int_{0}^{\infty} r^{2} f(r)\,dr \int_{S^{2}} g(r\omega)\,d\sigma = 0.
\end{displaymath}
On the other hand,
\begin{align*}
  \lefteqn{\int r^{-1} D f(r) \cdot D g(x)\,dx = \int r^{-1} D_{r} f D_{i} g \cdot \omega_{i}\,dx} \\
  &\qquad = -\int g \Bigl[ -r^{-2} (D_{r} f) \omega_{i}^{2} + r^{-1} (D_{rr} f) \omega_{i}^{2} + r^{-2} D_{r} f (\delta_{ii} - \omega_{i}^{2}) \Bigr]\,dx \\
  &\qquad = -\int g \bigl( r^{-2} D_{r} f + r^{-1} D_{rr} f \bigr)\,dx = 0,
\end{align*}
where the last equality follows by switching to polar coordinates.
\end{proof}

\begin{proofof}{Lemma \ref{lmm_Lame_wpd_3}}
By definition,
\begin{align*}
  \mathscr{B}^{*}(u,u) & = \frac{\alpha}{2} \int \bigl( v_{j} D_{k} v_{k} - v_{k} D_{k} v_{j} \bigr) D_{i} \Phi_{ij}\,dx \\
  &\quad{} + \int \bigl( D_{k} u_{i} D_{k} u_{j} + \alpha D_{k} u_{k} D_{i} u_{j} \bigr) \Phi_{ij}\,dx =: I_{1} + I_{2}.
\end{align*}
We have shown in Lemma \ref{lmm_Lame_wpd_2} that (see \eqref{eqn_D_Phi})
\begin{align*}
  I_{1} & = 2^{-1} \alpha d_{\alpha} \int r^{-2} \omega_{j} \bigl( v_{j} D_{k} v_{k} - v_{k} D_{k} v_{j} \bigr)\,dx \\
  & = \frac{c_{\alpha} \alpha}{\alpha+2} \int r^{-2} \Bigl[ v_{k} (D_{k} v) \cdot \omega - (\divg v) (v \cdot \omega) \Bigr]\,dx.
\end{align*}
On the other hand,
\begin{align*}
  I_{2} & = c_{\alpha} \int r^{-1} D_{k} u_{i} D_{k} u_{i}\,dx + \frac{c_{\alpha} \alpha}{\alpha+2} \int r^{-1} D_{k} u_{i} D_{k} u_{j} \cdot \omega_{i} \omega_{j}\,dx \\
  &\quad{} + c_{\alpha} \alpha \int r^{-1} D_{k} u_{k} D_{i} u_{i}\,dx + \frac{c_{\alpha} \alpha^{2}}{\alpha+2} \int r^{-1} D_{k} u_{k} D_{i} u_{j}\cdot \omega_{i} \omega_{j}\,dx \\
  & =: I_{3} + I_{4} + I_{5} + I_{6}.
\end{align*}
Substituting $u_{i} = \bar{u}_{i} + v_{i}$ into $I_{3}$ and using Lemma \ref{lmm_ortho} yields
\begin{align}
  I_{3} & = c_{\alpha} \int r^{-1} \bigl( D_{r} \bar{u}_{i} D_{r} \bar{u}_{i} \cdot \omega_{k}^{2} + D_{k} v_{i} D_{k} v_{i} \bigr)\,dx + 2c_{\alpha} \int r^{-1} D_{k} \bar{u}_{i} D_{k} v_{i}\,dx \nonumber \\
  & = c_{\alpha} \int r^{-1} \bigl( |D_{r} \bar{u}|^{2} + |D v|^{2} \bigr)\,dx.
  \label{eqn_Du2}
\end{align}
Next,
\begin{align*}
  I_{5} & = c_{\alpha} \alpha \int r^{-1} \bigl( D_{r} \bar{u}_{k} D_{r} \bar{u}_{i} \cdot \omega_{k} \omega_{i} + 2 D_{i} v_{i} D_{r} \bar{u}_{k} \cdot \omega_{k} + D_{k} v_{k} D_{i} v_{i} \bigr)\,dx.
\end{align*}
Note that for $k \neq i$,
\begin{displaymath}
  \int r^{-1} D_{r} \bar{u}_{k} D_{r} \bar{u}_{i} \cdot \omega_{k} \omega_{i}\,dx = \int_{0}^{\infty} r D_{r} \bar{u}_{k} D_{r} \bar{u}_{i}\,dr \int_{S^{2}} \omega_{k} \omega_{i}\,d\sigma = 0,
\end{displaymath}
and therefore
\begin{displaymath}
  I_{5} = c_{\alpha} \alpha \int r^{-1} \Bigl[ (D_{r} \bar{u}_{i})^{2} \omega_{i}^{2} + 2 (\divg v) (D_{r} \bar{u} \cdot \omega) + (\divg v)^{2} \Bigr]\,dx.
\end{displaymath}
As for $I_{4}$,
\begin{align*}
  I_{4} & = \frac{c_{\alpha} \alpha}{\alpha+2} \int r^{-1} D_{k} (\bar{u}_{i} + v_{i}) D_{k} (\bar{u}_{j} + v_{j}) \cdot \omega_{i} \omega_{j}\,dx \\
  & = \frac{c_{\alpha} \alpha}{\alpha+2} \int r^{-1} \bigl( D_{r} \bar{u}_{i} D_{r} \bar{u}_{j} \cdot \omega_{i} \omega_{j} \omega_{k}^{2} + D_{r} \bar{u}_{i} D_{k} v_{j} \cdot \omega_{i} \omega_{j} \omega_{k} \\
  &\hspace{1.05in}{} + D_{k} v_{i} D_{r} \bar{u}_{j} \cdot \omega_{i} \omega_{j} \omega_{k} + D_{k} v_{i} D_{k} v_{j} \cdot \omega_{i} \omega_{j} \bigr)\,dx \\
  & = \frac{c_{\alpha} \alpha}{\alpha+2} \int r^{-1} \Bigl[ (D_{r} \bar{u}_{i})^{2} \omega_{i}^{2} + 2 (D_{r} \bar{u} \cdot \omega) [\omega_{k} (D_{k} v) \cdot \omega] + |D_{k} v \cdot \omega|^{2} \Bigr]\,dx.
\end{align*}
Similarly,
\begin{align*}
  I_{6} & = \frac{c_{\alpha} \alpha^{2}}{\alpha+2} \int r^{-1} D_{k} (\bar{u}_{k} + v_{k}) D_{i} (\bar{u}_{j} + v_{j}) \cdot \omega_{i} \omega_{j}\,dx \\
  & = \frac{c_{\alpha} \alpha^{2}}{\alpha+2} \int r^{-1} \bigl( D_{r} \bar{u}_{k} D_{r} \bar{u}_{j} \cdot \omega_{i}^{2} \omega_{j} \omega_{k} + D_{r} \bar{u}_{k} D_{i} v_{j} \cdot \omega_{i} \omega_{j} \omega_{k} \\
  &\hspace{1.05in}{} + D_{r} \bar{u}_{j} D_{k} v_{k} \cdot \omega_{i}^{2} \omega_{j} + D_{k} v_{k} D_{i} v_{j} \cdot \omega_{i} \omega_{j} \bigr)\,dx \\
  & = \frac{c_{\alpha} \alpha^{2}}{\alpha+2} \int r^{-1} \Bigl[ (D_{r} \bar{u}_{j})^{2} \omega_{j}^{2} + (D_{r} \bar{u} \cdot \omega) [\omega_{i} (D_{i} v) \cdot \omega] \\
  &\hspace{1.05in}{} + (D_{r} \bar{u} \cdot \omega) (\divg v) + (\divg v) [\omega_{i} (D_{i} v) \cdot \omega] \Bigr]\,dx.
\end{align*}
The lemma follows by adding up all these integrals.
\end{proofof}

With the help of Lemma \ref{lmm_Lame_wpd_3}, we now complete the proof of Theorem \ref{thm_Lame_wpd}.

\begin{proofof}{Theorem \ref{thm_Lame_wpd}}
By Lemma \ref{lmm_Lame_wpd_2} and \ref{lmm_Lame_wpd_3},
\begin{displaymath}
  -c_{\alpha}^{-1} \int (L u)^{T} \Phi u\,dx = \frac{1}{2} c_{\alpha}^{-1} |u(0)|^{2} + I_{1} + I_{2} + I_{3},
\end{displaymath}
where
\begin{align*}
  I_{1} & = \int r^{-1} \biggl[ |D_{r} \bar{u}|^{2} + \alpha \frac{2\alpha+3}{\alpha+2} (D_{r} \bar{u}_{i})^{2} \omega_{i}^{2} + |D v|^{2} \\
  &\hspace{0.65in}{} + \alpha (\divg v)^{2} + \frac{\alpha}{\alpha+2} |(D_{k} v) \cdot \omega|^{2} \biggr]\,dx, \\
  \intertext{} \\
  I_{2} & = \int r^{-1} \biggl[ \frac{\alpha^{2}}{\alpha+2} (\divg v) [\omega_{i} (D_{i} v) \cdot \omega] + \alpha \frac{3\alpha+4}{\alpha+2} (D_{r} \bar{u} \cdot \omega) (\divg v) \\
  &\hspace{0.65in}{} + \alpha (D_{r} \bar{u} \cdot \omega) [\omega_{i} (D_{i} v) \cdot \omega] \biggr]\,dx, \\
  I_{3} & = \int \frac{\alpha}{\alpha+2} r^{-2} \Bigl[ v_{k} (D_{k} v) \cdot \omega - (\divg v) (v \cdot \omega) \Bigr]\,dx.
\end{align*}
Consider first the case $\alpha \geq 0$. By switching to polar coordinates, we have
\begin{align*}
  I_{1} & \geq \int r^{-1} \biggl[ |D_{r} \bar{u}|^{2} + \alpha \frac{2\alpha+3}{\alpha+2} (D_{r} \bar{u}_{i})^{2} \omega_{i}^{2} + |D v|^{2} + \alpha (\divg v)^{2} \biggr]\,dx \\
  & = \int_{0}^{\infty} r \biggl[ \Bigl( 1 + \frac{\alpha}{3} \cdot \frac{2\alpha+3}{\alpha+2} \Bigr) \|D_{r} \bar{u}\|_{\omega}^{2} + \|D v\|_{\omega}^{2} + \alpha \|\divg v\|_{\omega}^{2} \biggr]\,dr,
\end{align*}
where we have written $\|\cdot\|_{\omega}$ for $\|\cdot\|_{L^{2}(S^{2})}$ and used the fact that
\begin{displaymath}
  \int_{S^{2}} (D_{r} \bar{u}_{i})^{2} \omega_{i}^{2}\,d\sigma = \frac{4\pi}{3} \sum_{i=1}^{3} (D_{r} \bar{u}_{i})^{2} = \frac{1}{3} \int_{S^{2}} |D_{r} \bar{u}|^{2}\,d\sigma = \frac{1}{3} \|D_{r} \bar{u}\|_{\omega}^{2}.
\end{displaymath}
Next,
\begin{align*}
  |I_{2}| & \leq \int r^{-1} \biggl[ \frac{\alpha^{2}}{\alpha+2} |\divg v| |D v| + \alpha \frac{3\alpha+4}{\alpha+2} |D_{r} \bar{u} \cdot \omega| |\divg v| + \alpha |D_{r} \bar{u} \cdot \omega| |D v| \biggr]\,dx \\
  & \leq \int_{0}^{\infty} r \biggl[ \frac{\alpha^{2}}{\alpha+2} \|\divg v\|_{\omega} \|D v\|_{\omega} + \frac{\alpha}{\sqrt{3}} \cdot \frac{3\alpha+4}{\alpha+2} \|D_{r} \bar{u}\|_{\omega} \|\divg v\|_{\omega} \\
  &\hspace{0.6in}{} + \frac{\alpha}{\sqrt{3}} \|D_{r} \bar{u}\|_{\omega} \|D v\|_{\omega} \biggr]\,dr,
\end{align*}
where we have used
\begin{align*}
  \|D_{r} \bar{u} \cdot \omega\|_{\omega}^{2} & = \int_{S^{2}} D_{r} \bar{u}_{i} D_{r} \bar{u}_{j} \cdot \omega_{i} \omega_{j}\,d\sigma \\
  & = D_{r} \bar{u}_{i} D_{r} \bar{u}_{j} \cdot \frac{4\pi}{3} \delta_{ij} = \frac{4\pi}{3} \sum_{i=1}^{3} (D_{r} \bar{u}_{i})^{2} = \frac{1}{3} \|D_{r} \bar{u}\|_{\omega}^{2}.
\end{align*}
As for $I_{3}$, we note that
\begin{align*}
  |I_{3}| & \leq \frac{\alpha}{\alpha+2} \int r^{-2} \bigl( |v| |D v| + |v| |\divg v| \bigr)\,dx \\
  & \leq \frac{\alpha}{\alpha+2} \int_{0}^{\infty} \|v\|_{\omega} \bigl( \|D v\|_{\omega} + \|\divg v\|_{\omega} \bigr)\,dr.
\end{align*}
Since 2 is the first non-trivial eigenvalue of the Laplace-Beltrami operator on $S^{2}$, we have
\begin{align}
  \|v\|_{\omega}^{2} & = \int_{S^{2}} |v(r\omega)|^{2}\,d\sigma \leq \frac{1}{2} \int_{S^{2}} |D_{\omega}[v(r\omega)]|^{2}\,d\sigma \nonumber \\
  & = \frac{r^{2}}{2} \int_{S^{2}} |(D_{\omega}v)(r\omega)|^{2}\,d\sigma \leq \frac{r^{2}}{2} \|D v\|_{\omega}^{2}, \label{eqn_Hardy_crit}
\end{align}
where $D_{\omega}$ is the gradient operator on $S^{2}$. Thus
\begin{displaymath}
  |I_{3}| \leq \frac{1}{\sqrt{2}} \cdot \frac{\alpha}{\alpha+2} \int_{0}^{\infty} r \Bigl[ \|D v\|_{\omega}^{2} + \|D v\|_{\omega} \|\divg v\|_{\omega} \Bigr]\,dr,
\end{displaymath}
and by putting all pieces together we obtain
\begin{equation}
  I_{1}+I_{2}+I_{3} \geq \int_{0}^{\infty} r \bigl( w^{T} B_{+} w \bigr)\,dr,
  \label{eqn_Lame_est_alpha_+}
\end{equation}
where
\begin{align*}
  w & = \bigl( \|D_{r} \bar{u}\|_{\omega}, \|D v\|_{\omega}, \|\divg v\|_{\omega} \bigr)^{T}, \\
  B_{+} & =
  \begin{bmatrix}
    1 + \dfrac{\alpha}{3} \cdot \dfrac{2\alpha+3}{\alpha+2} & -\dfrac{\alpha}{2\sqrt{3}} & -\dfrac{\alpha}{2\sqrt{3}} \cdot \dfrac{3\alpha+4}{\alpha+2} \\
    -\dfrac{\alpha}{2\sqrt{3}} & 1-\dfrac{1}{\sqrt{2}} \cdot \dfrac{\alpha}{\alpha+2} & -\dfrac{\alpha}{2} \cdot \dfrac{\alpha+2^{-1/2}}{\alpha+2} \\
    -\dfrac{\alpha}{2\sqrt{3}} \cdot \dfrac{3\alpha+4}{\alpha+2} & -\dfrac{\alpha}{2} \cdot \dfrac{\alpha+2^{-1/2}}{\alpha+2} & \alpha
  \end{bmatrix}.
\end{align*}
Clearly, the weighted positive definiteness of $L$ follows from the positive definiteness of $B_{+}$, because the latter implies, for some $c > 0$, that
\begin{align*}
  \int_{0}^{\infty} r \bigl( w^{T} B_{+} w \bigr)\,dr & \geq c \int_{0}^{\infty} r |w|^{2}\,dr \\
  & \geq c \int_{0}^{\infty} r \bigl( \|D_{r} \bar{u}\|_{\omega}^{2} + \|Dv\|_{\omega}^{2} \bigr)\,dr = c \int r^{-1} |Du|^{2}\,dx.
\end{align*}
The positive definiteness of $B_{+}$, on the other hand, is equivalent to the positivity of the determinants of all leading principal minors of $B_{+}$:
\begin{subequations}\label{eqn_Lame_wpd_cond_alpha_+}
\begin{align}
  p_{+,1}(\alpha) & = \frac{2\alpha^{2}+6\alpha+6}{3(\alpha+2)} > 0, \label{eqn_Lame_wpd_cond_alpha_+_p1} \\
  p_{+,2}(\alpha) & = -\frac{1}{12(\alpha+2)^{2}} \Bigl[ \alpha^{4} - 4(1-\sqrt{2}) \alpha^{3} - 12(3-\sqrt{2}) \alpha^{2} \nonumber \\
  &\hspace{1.1in}{} - 12(6-\sqrt{2}) \alpha - 48 \Bigr] > 0, \label{eqn_Lame_wpd_cond_alpha_+_p2} \\
  \intertext{}
  p_{+,3}(\alpha) & = -\frac{\alpha}{12(\alpha+2)^{3}} \Bigl[ 6\alpha^{5} + (23+3\sqrt{2}) \alpha^{4} + (13+19\sqrt{2}) \alpha^{3} \nonumber \\
  &\hspace{1.1in}{} - (77-38\sqrt{2}) \alpha^{2} - (157-24\sqrt{2}) \alpha - 96 \Bigr] > 0. \label{eqn_Lame_wpd_cond_alpha_+_p3}
\end{align}
\end{subequations}
With the help of computer algebra packages, we find that \eqref{eqn_Lame_wpd_cond_alpha_+} holds for $0 \leq \alpha < \alpha_{+}$, where $\alpha_{+} \approx 1.524$ is the largest real root of $p_{+,3}$.

The estimates of $I_{1},\ I_{2}$, and $I_{3}$ are slightly different when $\alpha < 0$, since now the quadratic term $\alpha \|\divg v\|_{\omega}^{2}$ in $I_{1}$ is negative. This means that it is no longer possible to control the $\|\divg v\|_{\omega}$
terms in $I_{2},\ I_{3}$ by $\alpha \|\divg v\|_{\omega}^{2}$, and in order to obtain positivity we need to bound $\|\divg v\|_{\omega}$ by $\|D v\|_{\omega}$ as follows:
\begin{displaymath}
  \|\divg v\|_{\omega}^{2} \leq 3 \|D v\|_{\omega}^{2}.
\end{displaymath}
This leads to the following revised estimates:
\begin{align*}
  I_{1} & \geq \int_{0}^{\infty} r \biggl[ \Bigl( 1 + \frac{\alpha}{3} \cdot \frac{2\alpha+3}{\alpha+2} \Bigr) \|D_{r} \bar{u}\|_{\omega}^{2} + \|D v\|_{\omega}^{2} + 3\alpha \|D v\|_{\omega}^{2} + \frac{\alpha}{\alpha+2} \|D v\|_{\omega}^{2} \biggr]\,dr, \\
  |I_{2}| & \leq \int_{0}^{\infty} r \biggl[ \frac{\sqrt{3}\,\alpha^{2}}{\alpha+2} \|D v\|_{\omega}^{2} - \alpha \frac{3\alpha+4}{\alpha+2} \|D_{r} \bar{u}\|_{\omega} \|D v\|_{\omega} - \frac{\alpha}{\sqrt{3}} \|D_{r} \bar{u}\|_{\omega} \|D v\|_{\omega} \biggr]\,dr, \\
  |I_{3}| & \leq -\frac{1}{\sqrt{2}} \cdot \frac{\alpha}{\alpha+2} \int_{0}^{\infty} r \Bigl[ \|D v\|_{\omega}^{2} + \sqrt{3}\,\|D v\|_{\omega}^{2} \Bigr]\,dr.
\end{align*}
Hence
\begin{equation}
  I_{1}+I_{2}+I_{3} \geq \int_{0}^{\infty} r \bigl( w^{T} B_{-} w \bigr)\,dr,
  \label{eqn_Lame_est_alpha_-}
\end{equation}
where
\begin{align*}
  w & = \bigl( \|D_{r} \bar{u}\|_{\omega}, \|D v\|_{\omega} \bigr)^{T}, \\
  B_{-} & =
  \begin{bmatrix}
    1 + \dfrac{\alpha}{3} \cdot \dfrac{2\alpha+3}{\alpha+2} & \dfrac{\alpha}{2} \cdot \dfrac{3\alpha+4}{\alpha+2} + \dfrac{\alpha}{2\sqrt{3}} \\
    \dfrac{\alpha}{2} \cdot \dfrac{3\alpha+4}{\alpha+2} + \dfrac{\alpha}{2\sqrt{3}} & 1 + 3\alpha + \dfrac{\alpha}{\alpha+2} \Bigl( 1 + \dfrac{1+\sqrt{3}}{\sqrt{2}} - \sqrt{3}\,\alpha \Bigr)
  \end{bmatrix}.
\end{align*}
The positive definiteness of $B_{-}$ is equivalent to:
\begin{subequations}\label{eqn_Lame_wpd_cond_alpha_-}
\begin{align}
  p_{-,1}(\alpha) & = \frac{2\alpha^{2}+6\alpha+6}{3(\alpha+2)} > 0, \label{eqn_Lame_wpd_cond_alpha_-_p1} \\
  p_{-,2}(\alpha) & = \frac{1}{6(\alpha+2)^{2}} \Bigl[ -(2+7\sqrt{3}) \alpha^{4} + 2(15+\sqrt{2}-11\sqrt{3}+\sqrt{6}) \alpha^{3} \nonumber \\
  &\quad{} + 2(57+3\sqrt{2}-10\sqrt{3}+3\sqrt{6}) \alpha^{2} + 6(20+\sqrt{2}+\sqrt{6}) \alpha + 24 \Bigr] > 0, \label{eqn_Lame_wpd_cond_alpha_-_p2}
\end{align}
\end{subequations}
and \eqref{eqn_Lame_wpd_cond_alpha_-} holds for $\alpha_{-} < \alpha < 0$, where $\alpha_{-} \approx -0.194$ is the smallest real root of $p_{-,2}$.

Now we show that the 3D Lam\'{e} system is not positive definite with weight $\Phi$ when $\alpha$ is either too close to $-1$ or too large. By Proposition 3.11 in \cite{Luo2007}, the 3D Lam\'{e} system is positive definite with weight $\Phi$ only if
\begin{displaymath}
  \sum_{i,\beta,\gamma} A_{ip}^{\beta \gamma} \xi_{\beta} \xi_{\gamma} \Phi_{ip}(\omega) \geq 0,\qquad \forall \xi \in \dR^{3},\ \forall \omega \in S^{2}\qquad (p = 1,2,3),
\end{displaymath}
where
\begin{displaymath}
  A_{ij}^{\beta \gamma} = \delta_{ij} \delta_{\beta \gamma} + \frac{\alpha}{2} (\delta_{i\beta} \delta_{j\gamma} + \delta_{i\gamma} \delta_{j\beta})
\end{displaymath}
and (see equation \eqref{eqn_Lame_fs})
\begin{displaymath}
  \Phi_{ij}(\omega) = c_{\alpha} r^{-1} \Bigl( \delta_{ij} + \frac{\alpha}{\alpha+2} \omega_{i} \omega_{j} \Bigr)\qquad (i,j = 1,2,3).
\end{displaymath}
This means, in particular, that the matrix
\begin{align*}
  A(\omega; \alpha) & := \biggl( \sum_{i=1}^{3} A_{i1}^{\beta \gamma} \Phi_{i1}(\omega) \biggr)_{\beta,\gamma=1}^{3} \\
  & = \frac{c_{\alpha} r^{-1}}{2(\alpha+2)}
  \begin{bmatrix}
    2(\alpha+1) (\alpha+2 + \alpha \omega_{1}^{2}) & \alpha^{2} \omega_{1} \omega_{2} & \alpha^{2} \omega_{1} \omega_{3} \\
    \alpha^{2} \omega_{1} \omega_{2} & 2(\alpha+2 + \alpha \omega_{1}^{2}) & 0 \\
    \alpha^{2} \omega_{1} \omega_{3} & 0 & 2(\alpha+2 + \alpha \omega_{1}^{2})
  \end{bmatrix}
\end{align*}
is semi-positive definite for any $\omega \in S^{2}$ if the 3D Lam\'{e} system is positive definite with weight $\Phi$. But $A(\omega; \alpha)$ is semi-positive definite only if the determinant of its leading principal minor
\begin{align*}
  d_{2}(\omega; \alpha) & := \det
  \begin{bmatrix}
    2(\alpha+1) (\alpha+2 + \alpha \omega_{1}^{2}) & \alpha^{2} \omega_{1} \omega_{2} \\
    \alpha^{2} \omega_{1} \omega_{2} & 2(\alpha+2 + \alpha \omega_{1}^{2})
  \end{bmatrix} \\
  & = 4(\alpha+1) (\alpha+2 + \alpha \omega_{1}^{2})^{2} - \alpha^{4} \omega_{1}^{2} \omega_{2}^{2}
\end{align*}
is non-negative, and elementary estimate shows that
\begin{align*}
  \min_{\omega \in S^{2}} d_{2}(\omega; \alpha) & \leq d_{2} \bigl[ (2^{-1/2},2^{-1/2},0); \alpha \bigr] \\
  & = (\alpha+1) (3\alpha+4)^{2} - \frac{\alpha^{4}}{4} =: q(\alpha).
\end{align*}
It follows that the 3D Lam\'{e} system is not positive definite with weight $\Phi$ when $q(\alpha) < 0$, which holds for $\alpha < \alpha_{-}^{(c)} \approx -0.902$ or $\alpha > \alpha_{+}^{(c)} \approx 39.450$.
\end{proofof}

\begin{remark}
We have in fact shown that, for $\alpha_{-} < \alpha < \alpha_{+}$ and some $c > 0$ depending on $\alpha$,
\begin{displaymath}
  \int (L u)^{T} \Phi u\,dx \geq \frac{1}{2} |u(0)|^{2} + c \int |Du(x)|^{2} \frac{dx}{|x|}.
\end{displaymath}
If we replace $\Phi(x)$ by $\Phi_{y}(x) := \Phi(x-y)$, then
\begin{align}
  \int (L u)^{T} \Phi_{y} u\,dx & = \int [L u(x+y)]^{T} \Phi u(x+y)\,dx \nonumber \\
  & \geq \frac{1}{2} |u(y)|^{2} + c \int |Du(x+y)|^{2} \frac{dx}{|x|} \nonumber \\
  & \geq \frac{1}{2} |u(y)|^{2} + c \int \frac{|Du(x)|^{2}}{|x-y|}\,dx.
  \label{eqn_Lame_wpd_y}
\end{align}
\end{remark}

\section{Proof of Theorem \ref{thm_Lame_wt}\label{sec_Lame_wt_suf}}
In the next lemma and henceforth, we use the notation
\begin{align*}
  m_{\rho}(u) & = \rho^{-3} \int_{\Omega \cap S_{\rho}} |u(x)|^{2}\,dx,\qquad S_{\rho} = \bigl\{ x: \rho < |x| < 2\rho \bigr\}, \\
  M_{\rho}(u) & = \rho^{-3} \int_{\Omega \cap B_{\rho}} |u(x)|^{2}\,dx.
\end{align*}

\begin{lemma}
Suppose $L$ is positive definite with weight $\Phi$, and let $u = (u_{i})_{i=1}^{3},\ u_{i} \in \mathring{H}^{1}(\Omega)$ be a solution of
\begin{displaymath}
  L u = 0\qquad \text{on } \Omega \cap B_{2\rho}.
\end{displaymath}
Then
\begin{displaymath}
  \int_{\Omega} [L (u \eta_{\rho})]^{T} \Phi_{y} u \eta_{\rho}\,dx \leq c m_{\rho}(u),\qquad \forall y \in B_{\rho},
\end{displaymath}
where $\eta_{\rho}(x) = \eta(x/\rho),\ \eta \in C_{0}^{\infty}(B_{5/3}),\ \eta = 1$ on $B_{4/3}$, and $\Phi_{y}(x) = \Phi(x-y)$.
\label{lmm_Lame_wt_suf_1}
\end{lemma}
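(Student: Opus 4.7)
The plan is to exploit the hypothesis $Lu=0$ on $\Omega\cap B_{2\rho}$ to collapse $L(u\eta_\rho)$ onto a commutator supported in a thin annular shell around $|x|\sim\rho$, on which both $\Phi_y$ and the derivatives of $\eta_\rho$ are easily controlled; the estimate then closes by Cauchy--Schwarz together with a standard interior Caccioppoli inequality for the Lam\'{e} system.

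First I would expand, using the product rule,
\begin{align*}
L(u\eta_\rho)_i
  &= \eta_\rho\,(Lu)_i - 2(D_k u_i)(D_k\eta_\rho) - u_i(D_{kk}\eta_\rho) \\
  &\quad {}- \alpha\bigl[(D_i u_k)(D_k\eta_\rho) + (D_k u_k)(D_i\eta_\rho) + u_k(D_{ki}\eta_\rho)\bigr].
\end{align*}
Because $\supp\eta_\rho\subset B_{5\rho/3}\subset B_{2\rho}$, the leading term $\eta_\rho Lu$ vanishes on $\Omega$, while each remaining ``commutator'' term is supported in the annulus $A:=\{x: 4\rho/3\le|x|\le 5\rho/3\}\subset S_\rho$. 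Using the scaling bounds $|D\eta_\rho|\le C/\rho$ and $|D^2\eta_\rho|\le C/\rho^2$, this yields the pointwise estimate $|L(u\eta_\rho)|\le C\bigl(\rho^{-1}|Du|+\rho^{-2}|u|\bigr)$ on $A$.

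Next, for $y\in B_\rho$ and $x\in A$ we have $\rho/3\le|x-y|\le 8\rho/3$, so the explicit form \eqref{eqn_Lame_fs} of $\Phi$ gives $|\Phi_y(x)|\le C/\rho$ on $A$. Combining this with $|u\eta_\rho|\le|u|$,
\[
\biggl|\int_\Omega [L(u\eta_\rho)]^T \Phi_y\,u\eta_\rho\,dx\biggr|
  \le C\rho^{-2}\int_{\Omega\cap A}|Du|\,|u|\,dx + C\rho^{-3}\int_{\Omega\cap A}|u|^2\,dx.
\]
The second integral is already $\le Cm_\rho(u)$ since $A\subset S_\rho$. For the first I would apply Cauchy--Schwarz together with an interior Caccioppoli inequality: testing the weak equation $Lu=0$ against $\zeta^2 u\in\mathring H^1(\Omega)$ for a cutoff $\zeta\in C_0^\infty(\dR^3)$ with $\zeta\equiv 1$ on $A$, $\supp\zeta\subset\{7\rho/6\le|x|\le 11\rho/6\}\subset S_\rho$, and $|D\zeta|\le C/\rho$, one obtains $\int_{\Omega\cap A}|Du|^2\,dx\le C\rho\,m_\rho(u)$. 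Cauchy--Schwarz then furnishes $\int_{\Omega\cap A}|Du|\,|u|\,dx\le C\rho^{2}\,m_\rho(u)$, so the first integral is also $\le Cm_\rho(u)$, and the lemma follows.

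The main technical point is the Caccioppoli step, but in the range $\alpha_-<\alpha<\alpha_+$ of Theorem~\ref{thm_Lame_wpd} one has $\alpha>-1/3$, so $|Du|^2+\alpha(\divg u)^2\ge(1+3\alpha)|Du|^2$ after using the trivial bound $(\divg u)^2\le 3|Du|^2$, and the standard $\zeta^2 u$ integration by parts goes through without any Korn-type machinery. A minor additional check is that $\zeta u\in\mathring H^1(\Omega)$, which is immediate from $u_i\in\mathring H^1(\Omega)$ and the fact that $\zeta$ is a smooth compactly supported multiplier; this also ensures that all distributional manipulations above are legitimate.
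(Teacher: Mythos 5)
Your proof is correct and follows essentially the same path as the paper's: expand $L(u\eta_\rho)$ by the product rule, drop the vanishing $\eta_\rho Lu$ term, localize to the annulus where $D\eta_\rho$ lives, bound $\Phi_y$ by $C/\rho$ there, and finish with Cauchy--Schwarz plus a local energy (Caccioppoli) estimate.

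The one place you diverge is the Caccioppoli step. The paper cites the Agmon--Douglis--Nirenberg local energy estimate as a black box, which is valid for the Lam\'{e} system under the sole assumption of strong ellipticity $\alpha>-1$. You instead derive the Caccioppoli inequality by hand, testing $Lu=0$ against $\zeta^{2}u$ and absorbing via $(\divg u)^{2}\le 3|Du|^{2}$; this requires $1+3\alpha>0$, i.e.\ $\alpha>-1/3$. Since $\alpha_{-}\approx -0.194>-1/3$, your restriction is satisfied throughout the range where Theorem~\ref{thm_Lame_wpd} actually proves weighted positive definiteness, so your argument covers every case established in the paper. However, the lemma as stated only hypothesizes that $L$ is positive definite with weight $\Phi$, and Theorem~\ref{thm_Lame_wpd} leaves open whether this can occur for some $\alpha\in(\alpha_{-}^{(c)},\alpha_{-})\subset(-0.902,-0.194)$; for $\alpha\le -1/3$ your elementary absorption fails and one must fall back on the ADN estimate (or a Korn-type inequality), which is precisely what the paper's citation supplies. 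This is the only gap, and it is more a matter of generality than of substance; it would be worth flagging it as an additional restriction or replacing the hand-made Caccioppoli inequality with a citation to the general elliptic-systems result.
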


\begin{proof}
By definition of $u$,
\begin{displaymath}
  \int_{\Omega} [L (u \eta_{\rho})]^{T} \Phi_{y} u \eta_{\rho}\,dx = \int_{\Omega} [L (u \eta_{\rho})]^{T} \Phi_{y} u \eta_{\rho}\,dx - \int_{\Omega} (L u)^{T} \Phi_{y} u \eta_{\rho}^{2}\,dx,
\end{displaymath}
where the second integral on the right side vanishes and the first one equals
\begin{displaymath}
  -\int_{\Omega} \Bigl[ 2 D_{k} u_{i} D_{k} \eta_{\rho} + u_{i} D_{kk} \eta_{\rho} + \alpha \bigl( D_{i} u_{k} D_{k} \eta_{\rho} + D_{k} u_{k} D_{i} \eta_{\rho} + u_{k} D_{ki} \eta_{\rho} \bigr) \Bigr] u_{j} \eta_{\rho} (\Phi_{y})_{ij}\,dx.
\end{displaymath}
Note that $D\eta_{\rho},\ D^{2} \eta_{\rho}$ have compact support in $R := B_{5\rho/3} \setminus B_{4\rho/3}$, $|D^{k} \eta_{\rho}| \leq c\rho^{-k}$, and
\begin{displaymath}
  |\Phi_{y,ij}(x)| \leq \frac{c}{|x-y|} \leq c\rho^{-1},\qquad \forall x \in R,\ \forall y \in B_{\rho}.
\end{displaymath}
Thus
\begin{align*}
  \lefteqn{\int_{\Omega} [L (u \eta_{\rho})]^{T} \Phi_{y} u \eta_{\rho}\,dx \leq c \int_{\Omega \cap R} \rho^{-2} |u| |Du|\,dx + c \int_{\Omega \cap R} \rho^{-3} |u|^{2}\,dx} \\
  &\qquad \leq c \biggl[ \rho^{-3} \int_{\Omega \cap S_{\rho}} |u|^{2}\,dx \biggr]^{1/2} \biggl[ \rho^{-1} \int_{\Omega \cap R} |Du|^{2}\,dx \biggr]^{1/2} + c \rho^{-3} \int_{\Omega \cap S_{\rho}} |u|^{2}\,dx.
\end{align*}
The lemma then follows from the well-known local energy estimate \cite{Agmon1964}
\begin{displaymath}
  \rho^{-1} \int_{\Omega \cap R} |Du|^{2}\,dx \leq \rho^{-3} \int_{\Omega \cap S_{\rho}} |u|^{2}\,dx.
\end{displaymath}
\end{proof}

Combining \eqref{eqn_Lame_wpd_y} (with $u$ replaced by $u \eta_{\rho}$) and Lemma \ref{lmm_Lame_wt_suf_1}, we arrive at the following local estimate.
\begin{corollary}
Let the conditions of Lemma \ref{lmm_Lame_wt_suf_1} be satisfied. Then
\begin{displaymath}
  |u(y)|^{2} + \int_{\Omega \cap B_{\rho}} \frac{|Du(x)|^{2}}{|x-y|}\,dx \leq c m_{\rho}(u),\qquad \forall y \in \Omega \cap B_{\rho}.
\end{displaymath}
\label{cor_Lame_wt_suf_1}
\end{corollary}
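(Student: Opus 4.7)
The plan is to combine the pointwise weighted positive definiteness inequality \eqref{eqn_Lame_wpd_y} with the upper bound from Lemma \ref{lmm_Lame_wt_suf_1}, using $u\eta_{\rho}$ as a test function.

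First I would verify the two geometric facts that make the substitution $u \leadsto u\eta_{\rho}$ harmless on $B_{\rho}$. For $y \in \Omega \cap B_{\rho}$, the choice of $\eta$ gives $\eta_{\rho} \equiv 1$ on $B_{4\rho/3} \supset B_{\rho}$, so $(u\eta_{\rho})(y) = u(y)$ and $D(u\eta_{\rho})(x) = Du(x)$ pointwise on $\Omega \cap B_{\rho}$. Consequently,
\begin{displaymath}
  \int_{\Omega} \frac{|D(u\eta_{\rho})(x)|^{2}}{|x-y|}\,dx \geq \int_{\Omega \cap B_{\rho}} \frac{|Du(x)|^{2}}{|x-y|}\,dx.
\end{displaymath}

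Next I would apply \eqref{eqn_Lame_wpd_y} to $u\eta_{\rho}$. Since $u_{i} \in \mathring{H}^{1}(\Omega)$ and $\eta_{\rho} \in C_{0}^{\infty}(B_{5\rho/3})$, the product $u\eta_{\rho}$ belongs to $\mathring{H}^{1}(\Omega)$ with compact support; approximating it in the $H^{1}$ norm by $C_{0}^{\infty}$ functions (and noting that the singular weight $|x-y|^{-1}$ is controlled by $|D(\cdot)|^{2}$ via Hardy's inequality, so both sides pass to the limit) yields
\begin{displaymath}
  \int_{\Omega} [L(u\eta_{\rho})]^{T} \Phi_{y} (u\eta_{\rho})\,dx \geq \tfrac{1}{2} |u(y)|^{2} + c \int_{\Omega \cap B_{\rho}} \frac{|Du(x)|^{2}}{|x-y|}\,dx.
\end{displaymath}

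Finally, Lemma \ref{lmm_Lame_wt_suf_1} bounds the left-hand side by $c\, m_{\rho}(u)$. Combining the two estimates and absorbing constants proves the corollary.

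The only point that requires a little care is the density argument that extends \eqref{eqn_Lame_wpd_y} from smooth compactly supported test functions to $u\eta_{\rho} \in \mathring{H}^{1}(\Omega)$, since the right-hand side involves the singular weight $|x-y|^{-1}$. This is handled by the Hardy-type inequality \eqref{eqn_Hardy_crit} used already in Section \ref{sec_Lame_wpd}, which ensures that $\int |D\varphi|^{2}/|x-y|\,dx$ is continuous along $H^{1}$-approximating sequences; beyond this, the argument is purely formal manipulation.
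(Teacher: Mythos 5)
Your proposal is correct and follows the same route the paper takes: apply the weighted positivity inequality \eqref{eqn_Lame_wpd_y} to the cutoff function $u\eta_{\rho}$, observe that $\eta_{\rho}\equiv 1$ on $B_{4\rho/3}\supset B_{\rho}$ so that $(u\eta_{\rho})(y)=u(y)$ and $D(u\eta_{\rho})=Du$ on $\Omega\cap B_{\rho}$, and then dominate the left-hand side by $c\,m_{\rho}(u)$ via Lemma \ref{lmm_Lame_wt_suf_1}. The paper itself states the corollary as an immediate consequence of those two facts, so your reasoning matches. One small inaccuracy worth noting: the inequality \eqref{eqn_Hardy_crit} is a Poincar\'e inequality on $S^{2}$ for functions with zero spherical mean, and it does not by itself make $\varphi\mapsto\int|D\varphi|^{2}|x-y|^{-1}\,dx$ continuous in the $H^{1}$ norm (that quantity is not controlled by $\|D\varphi\|_{L^{2}}$ in general). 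What actually lets you pass from smooth compactly supported test functions to $u\eta_{\rho}$ is the interior elliptic regularity of the solution $u$: since $Lu=0$ on $\Omega\cap B_{2\rho}$, $u\eta_{\rho}$ is smooth in a neighborhood of the singularity $y$, while near $\partial\Omega$ (where $u\eta_{\rho}$ is merely $H^{1}$) the weight $|x-y|^{-1}$ is bounded. Approximating $u\eta_{\rho}$ only near $\partial\Omega$ then gives the extension. This subtlety is glossed over in the paper as well, so it does not affect the assessment that your argument is essentially the intended one.
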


To proceed, we need the following Poincar\'{e}-type inequality (see \cite{Maz'ya1963}).

\begin{lemma}
Let $u = (u_{i})_{i=1}^{3}$ be any vector function with $u_{i} \in \mathring{H}^{1}(\Omega)$. Then for any $\rho > 0$,
\begin{displaymath}
  m_{\rho}(u) \leq \frac{c}{\mathrm{cap}(\bar{S}_{\rho} \setminus \Omega)} \int_{\Omega \cap S_{\rho}} |Du|^{2}\,dx
\end{displaymath}
where $c$ is independent of $\rho$.
\label{lmm_Lame_wt_suf_2}
\end{lemma}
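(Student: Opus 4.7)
The plan is to reduce the inequality to the classical capacitary Poincar\'{e} inequality of Maz'ya (\cite{Maz'ya1963}) by rescaling to the unit annulus and working componentwise.

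First, I would rescale by setting $\tilde{u}(y) = u(\rho y)$ on $\tilde{\Omega} = \rho^{-1}\Omega$, so $\tilde{u}_{i} \in \mathring{H}^{1}(\tilde{\Omega})$. The elementary change of variables $x = \rho y$ gives $\int_{\Omega \cap S_{\rho}}|u|^{2}\,dx = \rho^{3}\int_{\tilde{\Omega}\cap S_{1}}|\tilde{u}|^{2}\,dy$ and $\int_{\Omega \cap S_{\rho}}|Du|^{2}\,dx = \rho\int_{\tilde{\Omega}\cap S_{1}}|D\tilde{u}|^{2}\,dy$, while the degree-one homogeneity of the harmonic capacity in $\dR^{3}$ gives $\mathrm{cap}(\bar{S}_{\rho}\setminus\Omega) = \rho\,\mathrm{cap}(\bar{S}_{1}\setminus\tilde{\Omega})$. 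These three identities together reduce the claim to the scale-invariant statement
\begin{displaymath}
  \int_{\tilde{\Omega}\cap S_{1}}|\tilde{u}|^{2}\,dy \leq \frac{c}{\mathrm{cap}(\bar{S}_{1}\setminus\tilde{\Omega})} \int_{\tilde{\Omega}\cap S_{1}}|D\tilde{u}|^{2}\,dy,
\end{displaymath}
with $c$ independent of $\rho$ and of the particular rescaled domain $\tilde{\Omega}$.

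Next I would observe that the vector inequality follows from the scalar case by summing over the three components. For each component $\tilde{u}_{i}$, since $\tilde{u}_{i}\in\mathring{H}^{1}(\tilde{\Omega})$, extending by zero produces a function in $H^{1}(\dR^{3})$ that vanishes on the compact set $F := \bar{S}_{1}\setminus\tilde{\Omega}$. The scalar bound
\begin{displaymath}
  \int_{S_{1}} w^{2}\,dy \leq \frac{c}{\mathrm{cap}(F)} \int_{S_{1}}|Dw|^{2}\,dy, \qquad w\in H^{1}(S_{1}),\ w=0 \text{ on } F,
\end{displaymath}
is precisely Maz'ya's capacitary Poincar\'{e} inequality, applied on the fixed bounded Lipschitz annulus $S_{1}$ (\cite{Maz'ya1963}; see also \cite{Maz'ya1985}, Section~10.1). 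Summing over $i=1,2,3$ yields the reduced inequality and hence the lemma.

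The routine portion of the argument is the dimensional scaling; the substantive input is the appeal to Maz'ya's inequality, in which the harmonic capacity of $F$ plays the role of the sharp constant controlling the $L^{2}$-norm of $H^{1}$ functions vanishing on $F$. The one point to watch carefully is that the capacity scales with exponent $n-2 = 1$ in three dimensions, which is exactly what makes the combination $\rho^{-3}\int|u|^{2}$ against $\mathrm{cap}(\bar{S}_{\rho}\setminus\Omega)^{-1}\int|Du|^{2}$ scale-invariant and the constant $c$ independent of $\rho$.
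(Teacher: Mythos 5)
Your argument is correct. The paper itself gives no proof of this lemma and simply cites Maz'ya's reference, so your rescaling reduction---invoking the degree-one homogeneity of harmonic capacity in $\dR^{3}$ to pass to the unit annulus $S_{1}$, applying the scalar capacitary Poincar\'{e} inequality there to the zero-extension of each $\tilde{u}_{i}$ vanishing on $F = \bar{S}_{1}\setminus\tilde{\Omega}$, and summing over components---is precisely the standard route one would take to justify the citation, with the key point (that the constant in Maz'ya's inequality on the fixed domain $S_{1}$ is independent of the compact set $F$, hence of $\tilde{\Omega}$ and $\rho$) correctly identified and handled.
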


The next corollary is a direct consequence of Corollary \ref{cor_Lame_wt_suf_1} and Lemma \ref{lmm_Lame_wt_suf_2}.
\begin{corollary}
Let the conditions of Lemma \ref{lmm_Lame_wt_suf_1} be satisfied. Then
\begin{displaymath}
  |u(y)|^{2} + \int_{\Omega \cap B_{\rho}} \frac{|Du(x)|^{2}}{|x-y|}\,dx \leq \frac{c}{\mathrm{cap}(\bar{S}_{\rho} \setminus \Omega)} \int_{\Omega \cap S_{\rho}} |Du|^{2}\,dx,\qquad \forall y \in \Omega \cap B_{\rho}.
\end{displaymath}
\label{cor_Lame_wt_suf_2}
\end{corollary}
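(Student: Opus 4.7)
The structure here is immediate from what has already been built up, so the plan is simply to chain the two preceding estimates. Corollary \ref{cor_Lame_wt_suf_1}, applied under the present hypotheses, gives
\[
  |u(y)|^{2} + \int_{\Omega \cap B_{\rho}} \frac{|Du(x)|^{2}}{|x-y|}\,dx \;\leq\; c\, m_{\rho}(u), \qquad y \in \Omega \cap B_{\rho},
\]
so the only remaining task is to replace the mean $m_{\rho}(u)$ on the annular shell $S_{\rho}$ by an expression involving $|Du|$ alone.

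This is precisely what Lemma \ref{lmm_Lame_wt_suf_2} delivers: since the components of $u$ lie in $\mathring{H}^{1}(\Omega)$, its hypothesis is satisfied, and the lemma bounds $m_{\rho}(u)$ above by $\int_{\Omega \cap S_{\rho}} |Du|^{2}\,dx$ multiplied by $c/\mathrm{cap}(\bar{S}_{\rho} \setminus \Omega)$. Substituting this directly into the right-hand side of the inequality from Corollary \ref{cor_Lame_wt_suf_1}, and absorbing the two universal constants into a single $c$, yields the asserted estimate for all $y \in \Omega \cap B_{\rho}$.

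There is no genuine obstacle; the proof is a one-line substitution. The only thing worth verifying is that the annular shell $S_{\rho}$ on which $m_{\rho}$ is defined coincides with the set over which the capacity term and the $|Du|^{2}$ integral in Lemma \ref{lmm_Lame_wt_suf_2} are taken, which is exactly the setup of that lemma. Accordingly, the proof can be presented in essentially one displayed line.
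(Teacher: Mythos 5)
Your proof is correct and matches the paper's approach exactly: the paper also presents this corollary as an immediate consequence of Corollary \ref{cor_Lame_wt_suf_1} combined with the Poincar\'{e}-type bound of Lemma \ref{lmm_Lame_wt_suf_2}.
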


We are now in a position to prove the following lemma, which is the key ingredient in the proof of Theorem \ref{thm_Lame_wt}.

\begin{lemma}
Suppose $L$ is positive definite with weight $\Phi$, and let $u = (u_{i})_{i=1}^{3},\ u_{i} \in \mathring{H}^{1}(\Omega)$ be a solution of $L u = 0$ on $\Omega \cap B_{2R}$. Then, for all $\rho \in (0,R)$,
\begin{equation}
  \sup_{x \in \Omega \cap B_{\rho}} |u(x)|^{2} + \int_{\Omega \cap B_{\rho}} |Du(x)|^{2} \frac{dx}{|x|} \leq c_{1} M_{2R}(u) \exp \biggl[ -c_{2} \int_{\rho}^{R} \mathrm{cap}(\bar{B}_{r} \setminus \Omega) r^{-2}\,dr \biggr],
  \label{eqn_Lame_wt_suf_est}
\end{equation}
where $c_{1},\ c_{2}$ are independent of $\rho$.
\label{lmm_Lame_wt_suf_3}
\end{lemma}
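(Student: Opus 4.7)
My plan is to iterate Corollary \ref{cor_Lame_wt_suf_1} on dyadic shells. Fix $\rho_j = 2^{-j}R$ and, writing $\gamma(\rho) := \mathrm{cap}(\bar{S}_\rho \setminus \Omega)$, study the functional
\begin{displaymath}
  \Psi(\rho) := \sup_{y \in \Omega \cap B_\rho} |u(y)|^2 + F(\rho), \qquad F(\rho) := \int_{\Omega \cap B_\rho} \frac{|Du(x)|^2}{|x|}\,dx,
\end{displaymath}
which is manifestly non-decreasing in $\rho$. The goal is a one-step decay estimate
\begin{displaymath}
  \Psi(\rho) \leq \Bigl( 1 + c\,\frac{\gamma(\rho)}{\rho} \Bigr)^{-1} \Psi(2\rho),
\end{displaymath}
which after dyadic iteration (using $\log(1+t)\geq c''t$ on the bounded range $t = \gamma(\rho)/(c\rho) \leq \text{const}$, since $\gamma(\rho) \leq \mathrm{cap}(\bar{B}_{2\rho}) \asymp \rho$) yields exponential decay in the partial sums $\sum_j \gamma(\rho_j)/\rho_j$.

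To produce the one-step estimate, I apply Corollary \ref{cor_Lame_wt_suf_1}: the uniformity in $y$ gives $\sup_{B_\rho}|u|^2 \leq (c/\gamma(\rho))\int_{S_\rho}|Du|^2\,dx$ directly, while for the $F$-piece I would send $y \to 0$ along a sequence in $\Omega$ (available since $O \in \partial\Omega$) and invoke Fatou's lemma on $|Du(x)|^2/|x-y_n|$ to conclude $F(\rho) \leq (c/\gamma(\rho))\int_{S_\rho}|Du|^2\,dx$ as well. Summed, $\Psi(\rho) \leq (2c/\gamma(\rho))\int_{S_\rho}|Du|^2\,dx$. On $S_\rho$ we have $|x|^{-1} \geq (2\rho)^{-1}$, so $\int_{S_\rho}|Du|^2\,dx \leq 2\rho(F(2\rho) - F(\rho)) \leq 2\rho(\Psi(2\rho) - \Psi(\rho))$, where the last step uses monotonicity of $\sup_{B_\rho}|u|^2$. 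Substituting and isolating $\Psi(\rho)$ yields the target inequality.

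Three clean-up steps then finish the proof. (i) The initial bound $\Psi(R) \leq c M_{2R}(u)$: run the Fatou argument at scale $R$ for $F(R)$ and apply Corollary \ref{cor_Lame_wt_suf_1} for $\sup_{B_R}|u|^2$, together with the obvious $m_R(u) \leq 8 M_{2R}(u)$. (ii) Converting $\sum_j \gamma(\rho_j)/\rho_j$ into $\int_\rho^R \mathrm{cap}(\bar{B}_r \setminus \Omega) r^{-2}\,dr$: countable subadditivity gives $\mathrm{cap}(\bar{B}_{\rho_k}\setminus\Omega) \leq \sum_{j > k} \gamma(\rho_j)$ (since $\mathrm{cap}\{0\}=0$), and a geometric swap-of-summation bounds $\int_\rho^R \mathrm{cap}(\bar{B}_r\setminus\Omega) r^{-2}\,dr$ by a constant multiple of $\sum_j \gamma(\rho_j)/\rho_j$. (iii) Passing from dyadic $\rho_N$ to arbitrary $\rho \in (0,R)$ uses monotonicity of $\Psi$ and absorbs the extra factor $\exp(c_2 \int_\rho^{\rho_{N-1}} \mathrm{cap}(\bar{B}_r\setminus\Omega) r^{-2}\,dr)$, which is bounded because $\mathrm{cap}(\bar{B}_r\setminus\Omega) \leq c\,r$, into $c_1$. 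I expect the main obstacle to be the Fatou step: one must select $y_n \in \Omega$ with $y_n \to 0$ for which Corollary \ref{cor_Lame_wt_suf_1} holds uniformly, and ensure the lower semicontinuity of $\int|Du|^2/|x-y|\,dx$ in $y$ is deployed correctly; once this and the $\bar{S}$-to-$\bar{B}$ capacity reduction are in place, the rest is bookkeeping.
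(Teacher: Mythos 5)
Your proposal is correct and follows essentially the same strategy as the paper's proof: iterate the local estimate of Corollary \ref{cor_Lame_wt_suf_2} (equivalently, your Corollary \ref{cor_Lame_wt_suf_1} combined with Lemma \ref{lmm_Lame_wt_suf_2}) over dyadic shells, exploit boundedness of $r^{-1}\mathrm{cap}(\bar{S}_r \setminus \Omega)$ to turn the one-step multiplicative gain $\bigl(1 + c\,\mathrm{cap}(\bar{S}_\rho \setminus \Omega)/\rho\bigr)^{-1}$ into exponential decay in the dyadic sum, and then convert that sum into $\int_\rho^R \mathrm{cap}(\bar{B}_r \setminus \Omega) r^{-2}\,dr$ via subadditivity of capacity. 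Your Fatou step replacing $|x-y|^{-1}$ by $|x|^{-1}$ makes explicit a limit that the paper applies silently, a clarification rather than a deviation.
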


\begin{proof}
Define
\begin{displaymath}
  \gamma(r) := r^{-1} \mathrm{cap}(\bar{S}_{r} \setminus \Omega).
\end{displaymath}
We first claim that $\gamma(r)$ is bounded from above by some absolute constant $A$. Indeed, The monotonicity of capacity implies that
\begin{displaymath}
  \mathrm{cap}(\bar{S}_{r} \setminus \Omega) \leq \mathrm{cap}(\bar{B}_{r}).
\end{displaymath}
By choosing smooth test functions $\eta_{r}(x) = \eta(x/r)$ with $\eta \in C_{0}^{\infty}(B_{2})$ and $\eta = 1$ on $B_{3/2}$, we also have
\begin{align*}
  \mathrm{cap}(\bar{B}_{r}) & \leq \int_{\dR^{3}} |D\eta_{r}|^{2}\,dx \leq \sup_{x \in \dR^{3}} |D\eta(x)|^{2} \int_{B_{2r}} r^{-2}\,dx \\
  & = \biggl[ \frac{32}{3} \pi \sup_{x \in \dR^{3}} |D\eta(x)|^{2} \biggr] r.
\end{align*}
Hence the claim follows.

We next consider the case $\rho \in (0,R/2]$. Denote the first and second terms on the left side of \eqref{eqn_Lame_wt_suf_est} by $\varphi_{\rho}$ and $\psi_{\rho}$, respectively. From Corollary \ref{cor_Lame_wt_suf_2}, it follows that for $r \leq R$,
\begin{displaymath}
  \varphi_{r} + \psi_{r} \leq \frac{c}{\gamma(r)} (\psi_{2r} - \psi_{r}) \leq \frac{c}{\gamma(r)} (\psi_{2r} - \psi_{r} + \varphi_{2r} - \varphi_{r}),
\end{displaymath}
which implies that
\begin{displaymath}
  \varphi_{r} + \psi_{r} \leq \frac{c}{c + \gamma(r)} (\varphi_{2r} + \psi_{2r}) = \frac{c e^{c_{0} \gamma(r)}}{c + \gamma(r)} \Bigr[ e^{-c_{0} \gamma(r)} (\varphi_{2r} + \psi_{2r}) \Bigr],\qquad \forall c_{0} > 0.
\end{displaymath}
Since $\gamma(r) \leq A$ and
\begin{displaymath}
  \sup_{s \in [0,A]} \frac{c e^{c_{0} s}}{c + s} \leq \max \Bigl\{ 1, \frac{c e^{c_{0} A}}{c + A}, c c_{0} e^{1-c c_{0}} \Bigr\},
\end{displaymath}
it is possible to choose $c_{0} > 0$ sufficiently small so that
\begin{displaymath}
  \sup_{r > 0} \frac{c e^{c_{0} \gamma(r)}}{c + \gamma(r)} \leq 1.
\end{displaymath}
It follows, for $c_{0}$ chosen this way, that
\begin{equation}
  \varphi_{r} + \psi_{r} \leq e^{-c_{0} \gamma(r)} (\varphi_{2r} + \psi_{2r}).
  \label{eqn_Lame_wt_suf_rec}
\end{equation}

By setting $r = 2^{-l} R\ (l \in \mathbb{N})$ and repeatedly applying \eqref{eqn_Lame_wt_suf_rec}, we obtain
\begin{displaymath}
  \varphi_{2^{-l} R} + \psi_{2^{-l} R} \leq \exp \biggl[ -c_{0} \sum_{j=1}^{l} \gamma(2^{-j} R) \biggr] (\varphi_{R} + \psi_{R}).
\end{displaymath}
If $l$ is such that $l \leq \log_{2}(R/\rho) < l+1$, then $\rho \leq 2^{-l} R < 2\rho$ and
\begin{displaymath}
  \varphi_{\rho} + \psi_{\rho} \leq \varphi_{2^{-l} R} + \psi_{2^{-l} R} \leq \exp \biggl[ -c_{0} \sum_{j=1}^{l} \gamma(2^{-j} R) \biggr] (\varphi_{R} + \psi_{R}).
\end{displaymath}
Note that by Corollary \ref{cor_Lame_wt_suf_1},
\begin{displaymath}
  \varphi_{R} + \psi_{R} \leq c m_{R}(u) \leq c M_{2R}(u).
\end{displaymath}
In addition, the subadditivity of the harmonic capacity implies that
\begin{align*}
  \sum_{j=1}^{l} \gamma(2^{-j} R) & \geq \sum_{j=1}^{l} \frac{\mathrm{cap}(\bar{B}_{2^{1-j} R} \setminus \Omega) - \mathrm{cap}(\bar{B}_{2^{-j} R} \setminus \Omega)}{2^{-j} R} \\
  & = \biggl[ \frac{\mathrm{cap}(\bar{B}_{R} \setminus \Omega)}{2^{-1} R} - \frac{\mathrm{cap}(\bar{B}_{2^{-l} R} \setminus \Omega)}{2^{-l} R} \biggr] + \sum_{j=1}^{l-1} \frac{\mathrm{cap}(\bar{B}_{2^{-j} R} \setminus \Omega)}{2^{-j} R} \\
  & = \frac{1}{2} \cdot \frac{\mathrm{cap}(\bar{B}_{R} \setminus \Omega)}{R} - 2\, \frac{\mathrm{cap}(\bar{B}_{2^{-l} R} \setminus \Omega)}{2^{-l} R} + \sum_{j=1}^{l} \frac{\mathrm{cap}(\bar{B}_{2^{-j} R} \setminus \Omega)}{2^{-j} R} \\
  & \geq -2\, \frac{\mathrm{cap}(\bar{B}_{2^{-l} R} \setminus \Omega)}{2^{-l} R} + \frac{1}{2} \sum_{j=0}^{l} \frac{\mathrm{cap}(\bar{B}_{2^{-j} R} \setminus \Omega)}{2^{-j} R}.
\end{align*}
Since
\begin{align*}
  \frac{\mathrm{cap}(\bar{B}_{2^{-l} R} \setminus \Omega)}{2^{-l} R} & \leq A, \\
  \sum_{j=0}^{l} \frac{\mathrm{cap}(\bar{B}_{2^{-j} R} \setminus \Omega)}{2^{-j} R} & \geq \frac{1}{2} \sum_{j=1}^{l+1} \frac{\mathrm{cap}(\bar{B}_{2^{1-j} R} \setminus \Omega)}{(2^{-j} R)^{2}} \cdot 2^{-j} R \\
  & \geq \frac{1}{2} \sum_{j=1}^{l+1} \int_{2^{-j} R}^{2^{1-j} R} \mathrm{cap}(\bar{B}_{r} \setminus \Omega) r^{-2}\,dr \\
  & \geq \frac{1}{2} \int_{\rho}^{R} \mathrm{cap}(\bar{B}_{r} \setminus \Omega) r^{-2}\,dr,
\end{align*}
we have
\begin{displaymath}
  \exp \biggl[ -c_{0} \sum_{j=1}^{l} \gamma(2^{-j} R) \biggr] \leq \exp \biggl[ -\frac{c_{0}}{4} \int_{\rho}^{R} \mathrm{cap}(\bar{B}_{r} \setminus \Omega) r^{-2}\,dr + 2c_{0} A \biggr].
\end{displaymath}
Hence \eqref{eqn_Lame_wt_suf_est} follows with $c_{1} = c e^{2c_{0} A}$ and $c_{2} = c_{0}/4$.

Finally we consider the case $\rho \in (R/2,R)$. By Corollary \ref{cor_Lame_wt_suf_1},
\begin{displaymath}
  |u(y)|^{2} + \int_{\Omega \cap B_{\rho}} \frac{|Du(x)|^{2}}{|x-y|}\,dx \leq c m_{\rho}(u),\qquad \forall y \in \Omega \cap B_{\rho},
\end{displaymath}
which implies that
\begin{displaymath}
  \sup_{y \in \Omega \cap B_{\rho}} |u(y)|^{2} + \int_{\Omega \cap B_{\rho}} |Du(x)|^{2} \frac{dx}{|x|} \leq c M_{2R}(u).
\end{displaymath}
In addition,
\begin{displaymath}
  \int_{\rho}^{R} \mathrm{cap}(\bar{B}_{r} \setminus \Omega) r^{-2}\,dr \leq A \int_{R/2}^{R} r^{-1}\,dr = A \log 2,
\end{displaymath}
so
\begin{displaymath}
  \biggl[ \sup_{y \in \Omega \cap B_{\rho}} |u(y)|^{2} + \int_{\Omega \cap B_{\rho}} |Du(x)|^{2} \frac{dx}{|x|} \biggr] \exp \biggl[ c_{2} \int_{\rho}^{R} \mathrm{cap}(\bar{B}_{r} \setminus \Omega) r^{-2}\,dr \biggr] \leq c_{1} M_{2R}(u)
\end{displaymath}
provided that $c_{1} \geq c e^{c_{2} A \log 2}$.
\end{proof}

\begin{proofof}{Theorem \ref{thm_Lame_wt}}
Consider the Dirichlet problem \eqref{eqn_Dirichlet}
\begin{displaymath}
  Lu = f,\qquad f_{i} \in C_{0}^{\infty}(\Omega),\ u_{i} \in \mathring{H}^{1}(\Omega).
\end{displaymath}
Since $f$ vanishes near the boundary, there exists $R > 0$ such that $f = 0$ in $\Omega \cap B_{2R}$. By Lemma \ref{lmm_Lame_wt_suf_3},
\begin{displaymath}
  \sup_{x \in \Omega \cap B_{\rho}} |u(x)|^{2} \leq c_{1} M_{2R}(u) \exp \biggl[ -c_{2} \int_{\rho}^{R} \mathrm{cap}(\bar{B}_{r} \setminus \Omega) r^{-2}\,dr \biggr],
\end{displaymath}
and in particular,
\begin{displaymath}
  \limsup_{x \to 0} |u(x)|^{2} \leq c_{1} M_{2R}(u) \exp \biggl[ -c_{2} \int_{0}^{R} \mathrm{cap}(\bar{B}_{r} \setminus \Omega) r^{-2}\,dr \biggr] = 0,
\end{displaymath}
where the last equation follows from the divergence of the Wiener integral
\begin{displaymath}
  \int_{0}^{1} \mathrm{cap}(\bar{B}_{r} \setminus \Omega) r^{-2}\,dr = \infty.
\end{displaymath}
Hence $O$ is regular with respect to $L$.
\end{proofof}

\end{document}